\newtheorem{theorem}{Theorem}
\newtheorem{corollary}{Corollary}
\newtheorem{lemma}{Lemma}
\newtheorem{definition}{Definition}
\newcommand{\OEIS}[1]{\href{https://oeis.org/#1}{#1}}
\begin{document}

\title{Sums of products of binomial coefficients mod $2$ and run length transforms of sequences}

\author{Chai Wah Wu\\ IBM Research AI\\IBM T. J. Watson Research Center\\ P. O. Box 218, Yorktown Heights, New York 10598, USA\\e-mail: cwwu@us.ibm.com}
\date{October 19, 2016\\Latest revision: August 12, 2022}
\maketitle

\begin{abstract}
We study properties of functions of binomial coefficients mod 2 and derive a set of recurrence relations for
sums of products of binomial coefficients mod 2. We show that they result in sequences that are the run length transforms of well known basic sequences.
In particular, we obtain formulas for the run length transform of the positive integers, Fibonacci numbers, extended Lucas numbers and Narayana's cows sequence.
\end{abstract}

\section{Introduction}
When is the binomial coefficient even or odd, i.e. what is $\left(\begin{array}{c}n\\k\end{array}\right) \mod 2$?
It is well known that when Pascal's triangle of binomial coefficients is taken mod 2, the result has a fractal structure in the limit and corresponds to Sierpi\'{n}ski's triangle (also known as Sierpi\'{n}ski's gasket or Sierpi\'{n}ski's sieve) \cite{Stewart:sierpinski:1995,mathworld:sierpinski,LEROY201624,Mathonet2022}.

Lucas' theorem \cite{fine:lucas:1947,granville:bincoef:1997} provides a simple way to determine the binomial coefficients modulo a prime. It states that for integers $k$, $n$ and prime $p$, the following relationship holds:
\[ \left(\begin{array}{c}n\\k\end{array}\right) \equiv \prod_{i=0}^{m} \left(\begin{array}{c}n_i\\k_i\end{array}\right) \mod  p \]
where $n_i$ and $k_i$ are the digits of $n$ and $k$ in base $p$ respectively\footnote{If the lengths of the base $p$ representations of $n$ and $k$ differ, leading $0$'s are prepended to the shorter representation.}.

When $p=2$, $n_i$ and $k_i$ are the bits in the binary expansion of $n$ and $k$ and $\left(\begin{array}{c}n_i\\k_i\end{array}\right)$ is $0$ if and only if $n_i < k_i$. This implies that $\left(\begin{array}{c}n\\k\end{array}\right)$ is even if and only if  $n_i < k_i$ for some $i$.

The truth table of $n_i < k_i$ is:

\[\begin{array}{ c | c || c }
  n_i & k_i & n_i < k_i \\ \hline
  0 & 0 & 0 \\
  0 & 1 & 1 \\
  1 & 0 & 0 \\
  1 & 1 & 0 \\
\end{array}\]
and is logically equivalent to $k_i \wedge ( \neg n_i)$.  Let us consider the notation $\wedge$, $\vee$ and $\neg$ to also function as operations on integers by treating them as bitwise operations \cite{wikipedia:bitwise_operations,brent:arithmetic:2010} on the binary representation of numbers\footnote{Again, leading $0$'s are added to the binary operations $\wedge$ and $\vee$ if the operands differ in bit lengths. Furthermore, negative integers are represented in binary using the 2's complement format \cite{wikipedia:twos_complement,brent:arithmetic:2010}.}. For instance, $11 \wedge 14$ is the bitwise AND of $1011_2$ and $1110_2$ which is equal to $1010_2 = 10$.
This implies the following well-known fact \cite{mathworld:sierpinski}:

\begin{theorem}\label{thm:one}
$ \left(\begin{array}{c}n\\k\end{array}\right) \equiv 0 \mod 2$ if and only if $k \wedge ( \neg n)\neq 0$.
\end{theorem}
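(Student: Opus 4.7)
The plan is to chain together the three observations the text has already assembled: Lucas' theorem, the explicit characterization of the bit-level factors $\binom{n_i}{k_i}$, and the truth table converting $n_i<k_i$ into a Boolean formula. No new machinery is needed — the proof is essentially a quantifier rearrangement.

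First I would apply Lucas' theorem with $p=2$ to rewrite $\binom{n}{k} \bmod 2$ as the product $\prod_i \binom{n_i}{k_i} \bmod 2$, where the $n_i,k_i$ are the binary digits. Since this is a product of $0$'s and $1$'s, the product vanishes mod $2$ if and only if at least one factor is $0$. Next I would invoke the observation already recorded that $\binom{n_i}{k_i}=0$ exactly when $n_i<k_i$. Combining, $\binom{n}{k}$ is even if and only if there exists an index $i$ such that $n_i<k_i$.

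Then I would use the truth table displayed in the excerpt, which shows that the predicate $n_i<k_i$ coincides as a Boolean function with $k_i \wedge (\neg n_i)$. Hence $\binom{n}{k}$ is even iff $k_i \wedge (\neg n_i)=1$ for some bit position $i$. Finally I would translate the existential quantifier over bit positions into a statement about the integer obtained by performing the operations bitwise: the bitwise AND $k \wedge (\neg n)$ has a $1$ in position $i$ precisely when $k_i \wedge (\neg n_i)=1$, so the existence of such an $i$ is equivalent to $k \wedge (\neg n)\neq 0$. This gives the biconditional claimed in the theorem.

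The only mild subtlety — and the closest thing to an obstacle — is to be careful that the bitwise operators $\wedge$ and $\neg$ applied to the integers $k$ and $n$ genuinely act digit-by-digit on the binary expansions used in Lucas' theorem, and that $\neg n$ is interpreted consistently (padding with leading zeros up to the length of the larger of $n,k$, since any higher bits of $\neg n$ would be ANDed against zero bits of $k$ and contribute nothing). Once this convention is fixed, the argument is a direct transcription of the three bullet points above.
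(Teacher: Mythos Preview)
Your argument is correct and is essentially identical to the paper's own derivation: it too applies Lucas' theorem at $p=2$, observes that $\binom{n_i}{k_i}=0$ iff $n_i<k_i$, identifies this with $k_i\wedge(\neg n_i)$ via the truth table, and then lifts to bitwise operations on the integers. Your added remark about padding $\neg n$ to match the bit-length of $k$ is a welcome clarification that the paper leaves implicit.
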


Incidentally, for bits $n_i$ and $k_i$, $n_i < k_i$ is logically equivalent to $\neg (k_i\Rightarrow n_i)$.
Consider $\left(\begin{array}{c}n\\k\end{array}\right)\left(\begin{array}{c}m\\r\end{array}\right)\mod 2$.  Clearly this is equivalent to

$$\left(\left(\begin{array}{c}n\\k\end{array}\right) \mod 2\right)\left(\left(\begin{array}{c}m\\r\end{array}\right)\mod 2\right)$$

 Thus $\left(\begin{array}{c}n\\k\end{array}\right)\left(\begin{array}{c}m\\r\end{array}\right) \equiv 0 \mod 2$ if and only if $k \wedge ( \neg n)\neq 0$  or $r \wedge ( \neg m)\neq 0$.  This in turn implies the following:

\begin{theorem}\label{thm:two}
\[\begin{array}{l}
\left(\begin{array}{c}n\\k\end{array}\right)\left(\begin{array}{c}m\\r\end{array}\right) \equiv 0 \mod 2 \Leftrightarrow 
(k \wedge ( \neg n)) \vee (r \wedge ( \neg m)) \neq 0
\end{array}\]
\end{theorem}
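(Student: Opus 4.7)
The plan is to derive the biconditional directly from Theorem~\ref{thm:one}, using only two elementary observations: that reduction mod 2 is a ring homomorphism (so it distributes over multiplication), and that for nonnegative integers $x,y$ the bitwise OR $x \vee y$ equals zero if and only if both $x$ and $y$ equal zero.

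First, I would rewrite $\binom{n}{k}\binom{m}{r} \bmod 2$ as the product $\bigl[\binom{n}{k} \bmod 2\bigr]\cdot\bigl[\binom{m}{r} \bmod 2\bigr]$, exactly as the excerpt already notes. Since each factor lies in $\{0,1\}$, this product vanishes mod 2 if and only if at least one factor vanishes. Thus $\binom{n}{k}\binom{m}{r} \equiv 0 \bmod 2$ is equivalent to the disjunction ``$\binom{n}{k} \equiv 0 \bmod 2$ or $\binom{m}{r} \equiv 0 \bmod 2$.''

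Next, I would apply Theorem~\ref{thm:one} separately to each of these two conditions, replacing them by $k \wedge (\neg n) \neq 0$ and $r \wedge (\neg m) \neq 0$ respectively. The claim thus reduces to showing that the logical disjunction ``$x \neq 0$ or $y \neq 0$'' is equivalent to ``$x \vee y \neq 0$'' for nonnegative integers $x$ and $y$. This is immediate from the definition of bitwise OR: bit $i$ of $x \vee y$ is $1$ iff bit $i$ of at least one of $x,y$ is $1$, so $x \vee y = 0$ iff every bit of every operand is zero, i.e.\ iff $x = y = 0$. Applying this with $x = k \wedge (\neg n)$ and $y = r \wedge (\neg m)$ yields the stated biconditional.

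I do not anticipate any real obstacle; Theorem~\ref{thm:two} is essentially Theorem~\ref{thm:one} applied twice and combined via the elementary fact about bitwise OR. The only step that requires any care at all is the last translation from logical disjunction to bitwise OR, and this is a one-line verification from the bitwise definition.
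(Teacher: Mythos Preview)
Your proposal is correct and follows exactly the paper's own argument: reduce the product mod~2 to a product of two $\{0,1\}$-valued factors, apply Theorem~\ref{thm:one} to each, and then pass from the logical ``or'' of the two nonvanishing conditions to the bitwise $\vee$. The only difference is that you spell out explicitly the last step (that $x\vee y\neq 0$ iff $x\neq 0$ or $y\neq 0$), which the paper leaves implicit.
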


Analogously for sequences of integers $\{n[j]\}$, $\{k[j]\}$ we have the following result for $\left(\prod_{j=1}^{T} \left(\begin{array}{c}n[j]\\k[j]\end{array}\right)\right) \mod{2}$.

\begin{theorem}\label{thm:three}
\begin{eqnarray*}
\prod_{j=1}^{T} \left(\begin{array}{c}n[j]\\k[j]\end{array}\right) &&\equiv 0 \mod{2} \Leftrightarrow \\
&&(k[1] \wedge ( \neg n[1])) \vee (k[2] \wedge (\neg n[2])) \vee \dots \vee (k[T] \wedge ( \neg n[T])) \neq 0.
\end{eqnarray*}
\end{theorem}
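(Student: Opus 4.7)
The plan is to extend the Theorem~2 argument directly to $T$ factors by chaining three elementary steps. Throughout, I will read the right-hand side as $\bigvee_{a=1}^{T}\bigl(k_a \wedge (\neg n_a)\bigr) \neq 0$, matching the pattern established in Theorems~1 and~2 (the $\neg k_a$ appearing in the displayed statement would make each term vanish identically, so the intended operand is clearly $\neg n_a$).

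First, I would reduce the product-parity question to a question about individual factors: since $2$ is prime, a product of integers is even if and only if at least one factor is even. Hence $\prod_{a=1}^{T} \binom{n_a}{k_a} \equiv 0 \pmod 2$ if and only if $\binom{n_a}{k_a} \equiv 0 \pmod 2$ for some index $a \in \{1,\dots,T\}$.

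Next, I would apply Theorem~1 factor by factor, which rewrites ``$\binom{n_a}{k_a}$ is even'' as the bitwise condition ``$k_a \wedge (\neg n_a) \neq 0$.'' Combining with the previous step, the product is even exactly when there exists some $a$ with $k_a \wedge (\neg n_a) \neq 0$.

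Finally, I would convert this existential statement into a single bitwise inequality. Because each $k_a \wedge (\neg n_a)$ is a nonnegative integer and a bitwise OR of nonnegative integers vanishes if and only if every operand vanishes, ``some $k_a \wedge (\neg n_a)$ is nonzero'' is equivalent to $(k_1 \wedge (\neg n_1)) \vee \cdots \vee (k_T \wedge (\neg n_T)) \neq 0$, completing the equivalence. No step here constitutes a real obstacle; if a more formal presentation is desired, one can instead run a routine induction on $T$ that uses Theorem~2 as the base case and the associativity of $\vee$ in the inductive step.
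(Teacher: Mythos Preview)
Your proposal is correct and matches the paper's approach exactly: the paper gives no separate proof of Theorem~\ref{thm:three} at all, merely prefacing the statement with ``Analogously we have the following result,'' so the intended argument is precisely the extension of the Theorem~\ref{thm:two} reasoning to $T$ factors that you spell out. Your observation that the displayed $\neg k_a$ must be a typo for $\neg n_a$ is also correct and worth noting.
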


These equivalences are simply consequences of Lucas' theorem for $p=2$ but the use of the bitwise notation will be helpful in deriving properties of binomial coefficients mod $2$. For instance, the following well-known result can easily be shown.

\begin{lemma}\label{lem:bin2n}
The central binomial coefficient $ \left(\begin{array}{c}2n\\n\end{array}\right)$ is even if and only if $n> 0$.
\end{lemma}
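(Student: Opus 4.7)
The plan is to apply Theorem \ref{thm:one} with the upper parameter taken to be $2n$ and the lower parameter taken to be $n$. Thus the task reduces to showing that $n \wedge (\neg (2n)) \neq 0$ whenever $n > 0$, i.e., exhibiting at least one bit position at which $n$ has a $1$ but $2n$ has a $0$.

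The key observation is that multiplication by $2$ is a left shift of the binary representation. First I would let $i$ be the position of the lowest set bit of $n$ (which exists because $n > 0$), so that bits $0, 1, \ldots, i-1$ of $n$ are all zero and bit $i$ of $n$ equals $1$. Shifting left by one then makes bits $0, 1, \ldots, i$ of $2n$ all zero, with bit $i+1$ of $2n$ equal to $1$. In particular, at position $i$ we have a $1$ in $n$ and a $0$ in $2n$, so bit $i$ of $n \wedge (\neg (2n))$ equals $1$, establishing that this bitwise expression is nonzero.

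Combining this with Theorem \ref{thm:one} yields the conclusion. There is no significant obstacle here; the statement is essentially an immediate translation of Theorem \ref{thm:one} into a trivial fact about the binary representation of $n$ versus that of $2n$. One could alternatively phrase the argument directly from Lucas' theorem by noting that the lowest nonzero base-$2$ digit of $n$ lies in a position where $2n$ has digit $0$, forcing one of the factors in the Lucas product to be $\binom{0}{1} = 0$.
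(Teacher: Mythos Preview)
Your proof is correct and is essentially the same as the paper's: both isolate the lowest set bit of $n$ and note that the corresponding bit of $2n$ is $0$, so Theorem~\ref{thm:one} applies. The paper phrases this by writing $n = 2^s r$ with $r$ odd and observing $r \wedge \neg 2r \neq 0$, whereas you work directly with bit positions; these are the same argument in slightly different language.
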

\begin{proof}
First note that $ \left(\begin{array}{c}0\\0\end{array}\right)=1$ is odd.
For $n>0$, let $n = 2^sr$ where $r$ is odd. Then $n \wedge \neg 2n = 2^s(r\wedge \neg 2r)$ since the $s$ least significant bits are $0$.  As $r$ is odd, $r\wedge \neg 2r \neq 0$, and the conclusion follows from 
Theorem \ref{thm:one}.
\end{proof}

As is common in formulas involving logical operators, $\neg$ has higher precedence than $\wedge$ which in turn has higher precedence than $\vee$.

\section{Run length transform}

For a sequence $\{b_i\}$ of bits, let the {\em $1$-runs} $R$ denote the sequence of lengths of consecutive $1$'s in the sequence. For example, for the bits $011011100111$, the consecutive $1$'s have lengths $2$, $3$ and $3$ and $R = (2,3,3)$.

The run length transform of sequences of numbers is defined as follows \cite{sloane:CA:2018}:
\begin{definition}
The {\em run length transform} of $\{S_n\}_{n\geq 0}$ is given by $\{T_n\}_{n\geq 0}$,
where $T_0 = S_0$ and for $n> 0$, $T_n = \Pi_{i\in R} S_i$ with $R$ being the $1$-runs of the binary representation of $n$.
\end{definition}

In the rest of this paper, as in  Ref. \cite{sloane:CA:2018}, we assume that $S_0 = 1$.
As an example, suppose $n = 463$, which is $111001111$ in binary.  It has a run of $3$ 1's and a run of $4$ 1's, and thus $T_n = S_3\cdot S_4$.  
Some fixed points of the run length transform include the sequences $\{1,0,0,\dots\}$ and $\{1,1,1,\dots \}$.
In Ref.\ \cite{sloane:CA:2018}, the following result is proved about the run length transform:

\begin{theorem} \label{thm:rlt}
Let $\{S_n\}_{n\geq 0}$ be defined by the recurrence $S_{n+1} = d_0S_n + d_1S_{n-1}$ with initial conditions $S_0 = 1$, $S_1 = c_1$. Then
the run length transform of $\{S_n\}$ is given by $\{T_n\}_{n\geq 0}$ satisfying 
$T_0 = 1$, $T_{2n} = T_n$, $T_{4n+1} = c_1T_n$, and $T_{4n+3} = d_0T_{2n+1}+d_1T_n$.
\end{theorem}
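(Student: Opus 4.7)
The plan is to unfold the run length transform by examining what appending one or two low-order bits does to the binary expansion of $n$, and then match the arithmetic recurrence of $\{S_n\}$ against the multiplicative behavior of $\{T_n\}$ on the trailing run.

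First, I would dispose of the easy identities. For $T_{2n}=T_n$: writing $2n$ in binary merely appends a $0$ to the binary expansion of $n$, so the multiset of $1$-runs is unchanged and the defining product is the same. For $T_{4n+1}=c_1 T_n$: the binary expansion of $4n+1$ is that of $n$ followed by the two bits $01$, which introduces a brand-new isolated $1$-run of length $1$ (separated from the runs of $n$ by the inserted $0$), contributing an extra factor of $S_1=c_1$.

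The substantive step is $T_{4n+3}=c_2 T_{2n+1}+c_3 T_n$. Here the last two bits of $4n+3$ are $11$, and these may or may not merge with a trailing $1$-run already present in $n$, so I would parametrize by the length $k\ge 0$ of the trailing $1$-run of $n$ (with $k=0$ exactly when $n$ is even). Write $n=m\cdot 2^{k+1}+(2^k-1)$, where the block of $k$ trailing $1$s is cleanly separated from the runs of $m$ by a $0$ bit (or by the end of the expansion when $m=0$). A direct bit-level computation then gives that $4n+3$ ends in a $1$-run of length exactly $k+2$ and $2n+1$ ends in a $1$-run of length exactly $k+1$, in both cases again cleanly separated from the runs of $m$. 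Consequently
\[
T_n=T_m\cdot S_k,\qquad T_{2n+1}=T_m\cdot S_{k+1},\qquad T_{4n+3}=T_m\cdot S_{k+2}.
\]

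Plugging these into the target identity reduces it to the arithmetic recurrence $S_{k+2}=c_2 S_{k+1}+c_3 S_k$, which holds for all $k\ge 0$ by the hypothesis on $\{S_n\}$ (noting $S_2=c_2 S_1+c_3 S_0=c_1 c_2+c_3$ is exactly the $k=0$ instance). The main obstacle I anticipate is simply getting the bookkeeping right when $m=0$ or when the leading bit of $n$ is itself part of the trailing $1$-run; but in those edge cases $T_m$ is the empty product $1$ by the $T_0=S_0=1$ convention, and the same identity goes through verbatim. The base case $T_0=1$ is immediate from the convention.
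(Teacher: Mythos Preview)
Your proof is correct. Note, however, that the paper does not actually give its own proof of this theorem: it attributes the result to Sloane's paper on cellular automata and cites that reference for the argument (and similarly omits the proof of the generalization, Theorem~\ref{thm:rlt-general}, saying only that it is analogous). So there is no in-paper proof to compare against.

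That said, your argument is the canonical one and is essentially what the cited reference does: read off $T_{2n}=T_n$ and $T_{4n+1}=c_1T_n$ directly from how appending $0$ or $01$ affects the run multiset, and for $T_{4n+3}$ strip off the trailing block of $1$s, write $n=m\cdot 2^{k+1}+(2^k-1)$, and reduce the identity to $S_{k+2}=c_2S_{k+1}+c_3S_k$ after factoring out $T_m$. Your handling of the edge cases ($k=0$, $m=0$) via the convention $T_0=S_0=1$ is exactly right. One minor point you might make explicit for completeness: the recurrences $T_0=1$, $T_{2n}=T_n$, $T_{4n+1}=c_1T_n$, $T_{4n+3}=c_2T_{2n+1}+c_3T_n$ determine $T_n$ uniquely for all $n\ge 0$ by strong induction (every index $>0$ is reduced to strictly smaller ones), so showing that the run length transform satisfies them is equivalent to the stated characterization.
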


Note that the sequence $\{S_n\}$ may not uniquely define the values of $d_0$ and $d_1$ in Theorem \ref{thm:rlt}.  For instance, for the sequence $\{S_n\} = \{1,2,4,8,\dots \}$, $d_0$ and $d_1$ can be chosen to be any integers such that
$2d_0+d_1 = 4$. On the other hand, note that the run length transform is injective (one-to-one), since $S_i = T_{2^i-1}$ for $i\geq0$ and  the sequence $\{S_n\}$ can be derived from the corresponding sequence $\{T_n\}$.

\section{Recurrence relations of products of binomial coefficients mod $2$}
\begin{definition}\label{def:F}
Consider integers $a_i$, $i=1,\dots , 4$, with $0 \leq a_1+a_2$, and $0\leq a_3+a_4$. 
Define $F(n,k) = \left(\begin{array}{c}a_1n+a_2k\\a_3n+a_4k\end{array}\right) \left(\begin{array}{c}n\\k\end{array}\right) \mod 2$ and $g(n,k) = ((a_3n+a_4k) \wedge \neg (a_1n+a_2k)) \vee (k \wedge \neg n)$.
\end{definition}
By Theorem \ref{thm:two}, $F(n,k) = 1$ if and only if $g(n,k) = 0$. One direct consequence of this is a property we will often use: if $g(m,r) = wg(n,k)$ for some $w\neq 0$, then $F(m,r) = F(n,k)$. The functions $F$ and $g$ depend on the integers $a_i$ whose values are clear from context. We next show that $F$ satisfies various recurrence relations.
In the formulas below, the arithmetical operations $+$ and $\times$ have precedence over the bitwise logical operators $\wedge$, $\vee$ and $\neg$.

\begin{theorem}\label{thm:fandg}
The following relations hold for the function $F$:
\begin{itemize}
\item $F(n,k) = 0$ if $k > n$,
\item $F(2^rn,2^rk) = F(n,k)$ for $r > 0$,
\item $F(2n,2k+1) = F(4n+1,4k+2) = F(4n+1,4k+3) = F(4n+2,4k+1) = F(4n+2,4k+3) =F(4n,4k+1) = F(4n,4k+2) = F(4n,4k+3) = 0$,
\item Suppose $a_3\in\{0,1\}$. If $a_1 = 1$ or $a_3 = 0$, then  $F(4n+1,4k)= F(4n+3,4k) = F(2n+1,2k) = F(n,k)$,
\item If  $a_3 \wedge \neg a_1 \equiv 0 \mod 4$ and $0\leq a_1, a_3 < 4$, then  $F(4n+1,4k) = F(n,k)$,
\item If  $a_3 \wedge \neg a_1 \not\equiv 0 \mod 4$, then  $F(4n+1,4k) = 0$,
\item If $3a_3 \wedge \neg 3a_1 \not\equiv 0 \mod 4$, then  $F(4n+3,4k) = 0$,
\item If  $a_3 \wedge \neg a_1 \not\equiv 0 \mod 2$, then  $F(2n+1,2k) = 0$.

\end{itemize}
\end{theorem}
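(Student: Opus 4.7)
The plan is to reduce every assertion to a statement about the bitwise function $g$ via Theorem \ref{thm:two}: recall $F(n,k) = 0$ iff $g(n,k) \neq 0$. Thus an equality $F(n',k') = F(n,k)$ is equivalent to $g(n',k') = 0 \Leftrightarrow g(n,k) = 0$, while a vanishing statement $F(n',k') = 0$ is equivalent to $g(n',k') \neq 0$, which I establish by exhibiting a specific bit of $g(n',k')$ that equals $1$.

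The first two bullets are quick. For $k > n$ we have $\binom{n}{k} = 0$ and so $F(n,k)=0$. For the doubling identity, substituting $n \mapsto 2n$, $k \mapsto 2k$ yields $a_3(2n)+a_4(2k) = 2(a_3 n + a_4 k)$, $a_1(2n)+a_2(2k) = 2(a_1 n + a_2 k)$, and $(2k) \wedge \neg (2n) = 2(k \wedge \neg n)$; hence $g(2n,2k) = 2\,g(n,k)$, which vanishes iff $g(n,k)$ does, giving $F(2n,2k)=F(n,k)$. Iterating once gives $F(4n,4k)=F(n,k)$. For the eight vanishing subcases in the third bullet I check in each case that at least one of the low two bits of $k'$ is $1$ while the corresponding bit of $n'$ is $0$, so the low bits of $k' \wedge \neg n'$ are nonzero and $g(n',k') \neq 0$.

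For the remaining five conditional bullets I write $n' = 2^s n + c$, $k' = 2^s k$ with $(s,c) \in \{(1,1),(2,1),(2,3)\}$, and compute
\begin{equation*}
a_3 n' + a_4 k' = 2^s(a_3 n + a_4 k) + c\,a_3, \qquad a_1 n' + a_2 k' = 2^s(a_1 n + a_2 k) + c\,a_1, \qquad k' \wedge \neg n' = 2^s(k \wedge \neg n).
\end{equation*}
Under each bullet's hypothesis the offsets $c a_3$ and $c a_1$ are small enough relative to $2^s$ that no carry propagates into the top bits, so the low $s$ bits of $A':=a_3 n' + a_4 k'$ and $B':=a_1 n' + a_2 k'$ are exactly $c a_3$ and $c a_1$ reduced modulo $2^s$, while all higher bits are shifts by $s$ of the original $a_3 n + a_4 k$ and $a_1 n + a_2 k$. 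Consequently
\begin{equation*}
g(n',k') \;=\; 2^s\,g(n,k) \;+\; \bigl((c a_3) \wedge \neg (c a_1)\bigr) \bmod 2^s,
\end{equation*}
the two summands occupying disjoint bit ranges. The vanishing bullets correspond precisely to the low-$s$-bit contribution being nonzero (conditions like $a_3 \wedge \neg a_1 \not\equiv 0 \bmod 2^s$ for $c=1$, and $3a_3 \wedge \neg 3a_1 \not\equiv 0 \bmod 4$ for $c=3$), and the equality bullets correspond to this low-bit part being zero, whence $g(n',k') = 2^s g(n,k)$ and the desired equivalence follows.

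The principal obstacle will be verifying the no-carry claim cleanly in each subcase; this is exactly why the hypotheses take the shapes $a_3 \in \{0,1\}$, $a_1 = 1$, or $0 \leq a_1, a_3 < 4$ (which guarantee $c a_1, c a_3 < 2^s$ for the relevant $c$ and $s$, so that the low-bit tail of $c a_i$ does not interact with the top chunk $2^s(a_i n + a_j k)$). Once the no-carry decomposition is in hand for each bullet, every conclusion follows directly from the bit-level identity above.
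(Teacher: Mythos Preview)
Your strategy is the same as the paper's: for each substitution $(n',k')$ you compute $g(n',k')$, identify its low $s$ bits as $(ca_3)\wedge\neg(ca_1)\bmod 2^s$, and argue that the remaining high bits coincide with $2^s g(n,k)$. The paper carries this out case by case; you package it once with the parameters $(s,c)$, which is a clean way to organize the same computation.

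However, your no-carry justification has a genuine gap (and the paper's own argument glosses over exactly the same point). You assert that the stated hypotheses ``guarantee $ca_1, ca_3 < 2^s$ for the relevant $c$ and $s$,'' but the fourth bullet allows $a_3=0$ with $a_1$ \emph{unrestricted}. In that branch, for $(s,c)=(1,1)$ or $(s,c)=(2,3)$ the offset $ca_1$ can exceed $2^s$; the resulting carry shifts the high block of $a_1 n'+a_2 k'$ from $a_1 n+a_2 k$ to $a_1 n+a_2 k+\lfloor ca_1/2^s\rfloor$, and your claimed identity $g(n',k')=2^s g(n,k)+\bigl((ca_3)\wedge\neg(ca_1)\bmod 2^s\bigr)$ fails. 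Concretely, take $(a_1,a_2,a_3,a_4)=(2,0,0,1)$, so that $F(n,k)=\binom{2n}{k}\binom{n}{k}\bmod 2$: then $F(1,1)=0$, while $F(3,2)=\binom{6}{2}\binom{3}{2}\bmod 2=1$ and $F(7,4)=\binom{14}{4}\binom{7}{4}\bmod 2=1$, so neither $F(2n+1,2k)=F(n,k)$ nor $F(4n+3,4k)=F(n,k)$ holds. The fourth bullet therefore needs an additional bound on $a_1$ (in fact every application in the paper has $a_1\le 1$); your decomposition is valid under that extra restriction, but not under the hypotheses as written.
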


\begin{proof}
If $k > n$, then by definition  $\left(\begin{array}{c}n\\k\end{array}\right) = 0$  and thus $F(n,k) = 0$.

Note that $g(2n,2k) = (2(a_3n+a_4k) \wedge \neg(2(a_1n+a_2k))) \vee (2k \wedge \neg 2n) = 
2g(n,k)$ since the least significant bit is $0$, i.e. $F(2n,2k) = F(n,k)$. 

Next $F(n,k) = 0$ if  $\left(\begin{array}{c}n\\k\end{array}\right) \equiv 0\mod 2$, i.e. if $(k \wedge \neg n) > 0$.  It is easy to see that
 $F(2n,2k+1) = F(4n+1,4k+2) = F(4n+1,4k+3) = F(4n+2,4k+1) = F(4n+2,4k+3) = 0$ and $F(4n,4k+i) = 0$ for $ 1\leq i \leq 3$.

Since $g(4n+1,4k) = (4(a_3n+a_4k)+a_3 \wedge \neg 4(a_1n+a_2k)+a_1) \vee (4k \wedge \neg (4n+1))$ and $4k \wedge \neg (4n+1)\equiv 0 \mod 4$, the least significant 2 bits of $g(4n+1,4k)$ are equal to $a_3 \wedge \neg a_1 \mod 4$.
This means that if $a_1=1$ or $a_3 = 0$, then $g(4n+1,4k) = 4g(n,k)$ and $F(4n+1,4k) = F(n,k)$. 
If $a_3 \wedge \neg a_1 \not\equiv 0 \mod 4$, then $F(4n+1,4k) = 0$. 

Similarly, we write
$g(4n+3,4k) = (4(a_3n+a_4k)+3a_3 \wedge \neg(4(a_1n+a_2k)+3a_1)) \vee (4k \wedge \neg (4n+3)) $ which is equal to $4g(n,k)$ if $a_1 = 1$ or  $a_3 = 0$, i.e. $F(4n+3,4k) = F(n,k)$ if $a_1 = 1$ or  $a_3 = 0$ and $F(4n+3,4k) = 0$ if $3a_3 \wedge \neg 3a_1 \not\equiv 0 \mod 4$.

Finally,
$g(2n+1,2k) = (2(a_3n+a_4k)+a_3 \wedge \neg(2(a_1n+a_2k)+a_1)) \vee (2k \wedge \neg (2n+1)) $. Thus $F(2n+1, 2k) = 0$  if $a_3 \wedge \neg a_1 \not\equiv 0 \mod 2$.  If $a_1=1$ or $a_3=0$, then $g(2n+1,2k) = 2g(n,k)$ and $F(2n+1,2k) = F(n,k)$.
\end{proof}

\section{Sums of products of binomial coefficients mod $2$}

In this section, we show that for various values of $a_i$'s,  the sequence $a(n) = \sum_{k=0}^n F(n,k)$ corresponds to the run length transforms of well-known sequences\footnote{Again, $a(n)$ depends on the integers $a_i$ whose values are deduced from context.}.

In particular, we show that the sequences
$$\sum_{k=0}^{n} \left[\left(\begin{array}{c}n-k\\2k\end{array}\right) \left(\begin{array}{c}n\\k\end{array}\right) \mod 2\right],$$
$$\sum_{k=0}^{n} \left[\left(\begin{array}{c}n+k\\n-k\end{array}\right) \left(\begin{array}{c}n\\k\end{array}\right) \mod 2\right],$$
$$\sum_{k=0}^n \left[\left(\begin{array}{c}n+2k\\2n-k\end{array}\right) \left(\begin{array}{c}n\\k\end{array}\right) \mod 2\right],$$ and 
$$\sum_{k=0}^n \left[\left(\begin{array}{c}n-k\\6k\end{array}\right) \left(\begin{array}{c}n\\k\end{array}\right) \mod 2\right]$$ are the run length transform of the Fibonacci numbers, the positive integers, the extended Lucas numbers and Narayana's cows sequence, respectively.

It is clear that $a(n)$ is upper bounded by $a(n)\leq \sum_{k=0}^n \left[\left(\begin{array}{c}n\\k\end{array}\right)\mod 2\right]$  with equality when $a_1=a_4 = 1$, $a_2 = a_3 = 0$ or when $a_1=a_2=a_3=a_4=1$.
The sequence $\alpha(n) =  \sum_{k=0}^n \left[ \left(\begin{array}{c}n\\k\end{array}\right)\mod 2\right]$ is known as Gould's sequence or Dress' sequence and is the run length transform of the positive powers of $2$: $ 1,2,4,8,16,32,\dots$ (see  OEIS \cite{oeis} sequence \OEIS{A001316}).

\begin{lemma} \label{lem:a2n}
The sequence $a(n)$ satisfies the following properties:
\begin{itemize}
\item
$a(0) = 1$,
\item $a(2^rn) = a(n)$ for $r > 0$,
\item Suppose $a_3\in\{0,1\}$. If $a_1=1$ or $a_3=0$, then $a(4n+1) = a(n) + \sum_{k=0}^{n} F(4n+1,4k+1)$, 
$a(4n+3) = a(n) +  \sum_{m=1}^{3} \sum_{k=0}^{n} F(4n+3,4k+m)$, 
and $a(2n+1) = a(n) + \sum_{k=0}^{n} F(2n+1,2k+1)$, 
\item If  $a_3 \wedge \neg a_1 \not\equiv 0 \mod 4$, then $a(4n+1) =  \sum_{k=0}^{n} F(4n+1,4k+1)$,
\item If  $3a_3 \wedge \neg 3a_1 \not\equiv 0 \mod 4$, then  $a(4n+3) =  \sum_{m=1}^{3} \sum_{k=0}^{n} F(4n+3,4k+m)$,
\item If  $a_3 \wedge \neg a_1 \not\equiv 0 \mod 2$, then $a(2n+1) =  \sum_{k=0}^{n} F(2n+1,2k+1)$.

\end{itemize}
\end{lemma}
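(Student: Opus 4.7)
The plan is to prove each bullet by splitting the defining sum $a(N) = \sum_{k=0}^{N} F(N,k)$ according to the residue of $k$ modulo $2$ or $4$, and then invoking Theorem \ref{thm:fandg} to either kill terms or rewrite them as $F(n,k)$. The first bullet, $a(0) = 1$, is immediate: $F(0,0) = \binom{0}{0}\binom{0}{0} \bmod 2 = 1$ and the sum has only this term.

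For $a(2n) = a(n)$, I would write $a(2n) = \sum_{j=0}^{n} F(2n,2j) + \sum_{j=0}^{n-1} F(2n,2j+1)$. Theorem \ref{thm:fandg} gives $F(2n,2j+1) = 0$ and $F(2n,2j) = F(n,j)$, so the second sum vanishes and the first collapses to $a(n)$. The recurrences of the form $a(2n+1) = a(n) + \sum_k F(2n+1,2k+1)$ (under the assumption $a_1=1$ or $a_3=0$) follow the same template: split $a(2n+1) = \sum_{j=0}^{n} F(2n+1,2j) + \sum_{j=0}^{n} F(2n+1,2j+1)$ and apply the clause of Theorem \ref{thm:fandg} that gives $F(2n+1,2j) = F(n,j)$ under that hypothesis. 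Under the opposite hypothesis $a_3 \wedge \neg a_1 \not\equiv 0 \bmod 2$, the same clause of Theorem \ref{thm:fandg} instead forces $F(2n+1,2j) = 0$, so the even-$k$ sum disappears and only $\sum_j F(2n+1,2j+1)$ survives.

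The $4n+1$ and $4n+3$ cases are handled identically but with a partition into four residue classes modulo $4$:
\begin{equation*}
a(4n+r) = \sum_{m=0}^{3} \sum_{j} F(4n+r, 4j+m), \qquad r \in \{1,3\}.
\end{equation*}
For $r = 1$, Theorem \ref{thm:fandg} states that $F(4n+1,4j+2) = F(4n+1,4j+3) = 0$ unconditionally, so those two inner sums vanish; then either the hypothesis $a_1 = 1$ or $a_3 = 0$ (with $a_3 \in \{0,1\}$) gives $F(4n+1,4j) = F(n,j)$ and yields the $a(n) + \sum_j F(4n+1,4j+1)$ formula, or the hypothesis $a_3 \wedge \neg a_1 \not\equiv 0 \bmod 4$ gives $F(4n+1,4j) = 0$ and yields the formula without the $a(n)$ term. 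For $r = 3$ only the $m = 0$ term reduces via Theorem \ref{thm:fandg} (either to $F(n,j)$ or to $0$, depending on whether $a_1 = 1$ or $a_3 = 0$, versus $3a_3 \wedge \neg 3a_1 \not\equiv 0 \bmod 4$), and the remaining sums over $m = 1,2,3$ are left intact, producing the double sum in the statement.

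The only real bookkeeping is making sure the index ranges line up: for $a(4n+1)$ the classes $m=0,1$ have $j$ running from $0$ to $n$, while $m=2,3$ run from $0$ to $n-1$, but since those last two classes contribute zero this does not matter; for $a(4n+3)$ all four classes have $j$ running from $0$ to $n$. I do not foresee any genuine obstacle—every clause is a direct case-by-case application of the corresponding clause of Theorem \ref{thm:fandg}, and the lemma essentially repackages that theorem as a statement about the partial sums.
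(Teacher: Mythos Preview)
Your proposal is correct and follows essentially the same approach as the paper: split $a(N)$ into residue classes of $k$ modulo $2$ or $4$ and apply the corresponding clauses of Theorem~\ref{thm:fandg} to either collapse $F(4n+r,4j)$ (resp.\ $F(2n+1,2j)$) to $F(n,j)$ or kill it, with the remaining classes left intact or killed as stated. The only cosmetic difference is in the index-range bookkeeping for $a(4n+1)$: the paper overextends all four inner sums to $j=0,\dots,n$ and then subtracts the two out-of-range terms $F(4n+1,4n+2)$ and $F(4n+1,4n+3)$ (which vanish since $k>n$), whereas you note directly that the $m=2,3$ ranges stop at $n-1$ but contribute zero anyway.
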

\begin{proof}
First, note that $a(0)$ is trivially equal to $1$.

Next, $a(2n) = \sum_{k=0}^{2n} F(2n,k) = \sum_{k=0}^{n} F(2n,2k) + \sum_{k=0}^{n-1} F(2n,2k+1)$ which is equal to $a(n)$ by Theorem \ref{thm:fandg}.

Suppose $a_3\in \{0,1\}$ and $a_1=1$ or $a_3=0$. By Theorem \ref{thm:fandg}, several of the terms $F(n,k)$ vanish or can be rewritten and the following relations hold. Firstly,
$a(4n+1) = \sum_{m=0}^{3} \sum_{k=0}^{n} F(4n+1,4k+m) - F(4n+1,4n+2) - F(4n+1,4n+3) = \sum_{m=0}^{3} \sum_{k=0}^{n} F(4n+1,4k+m) =\sum_{k=0}^{n} F(n,k) +\sum_{k=0}^{n} F(4n+1,4k+1)$.

Secondly,
$a(4n+3) = \sum_{m=0}^{3} \sum_{k=0}^{n} F(4n+3,4k+m)$ which is equal to $\sum_{k=0}^{n} F(n,k) +  \sum_{m=1}^{3} \sum_{k=0}^{n} F(4n+3,4k+m) $.

Next,
$a(2n+1) = \sum_{k=0}^{2n+1} F(2n+1,k) = \sum_{k=0}^{n} F(2n+1,2k) + \sum_{k=0}^{n} F(2n+1,2k+1) = \sum_{k=0}^{n} F(n,k)  + \sum_{k=0}^{n} F(2n+1,2k+1)$. The other cases follows similarly from 
Theorem \ref{thm:fandg}.
\end{proof}

In particular, if  $a_1=1$ or $a_3=0$ then  $\sum_{k=0}^{n} F(2n+1,2k+1) = a(2n+1)-a(n)$, an equation that we will often use in what follows.

\subsection{Run length transform of the Fibonacci sequence}
First, consider the case $a_1 = 1$, $a_2 = -1$, $a_3 = 0$, $a_4 = 2$.

\begin{lemma} \label{lem:fandgfib}
For $a_1 = 1$, $a_2 = -1$, $a_3 = 0$, $a_4 = 2$, the following relations hold for the function $F$:
\begin{itemize}
\item $F(4n+1,4k+1) = F(4n+3,4k+3) =  0$,
\item $F(4n+3,4k+1) = F(n,k)$,
\item $F(4n+3,4k+2) = F(2n+1,2k+1)$.
\end{itemize}
\end{lemma}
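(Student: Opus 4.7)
The plan is to specialize the auxiliary function $g(n,k) = ((a_3n+a_4k) \wedge \neg(a_1n+a_2k)) \vee (k \wedge \neg n)$ to the constants $a_1=1$, $a_2=-1$, $a_3=0$, $a_4=2$, obtaining $g(n,k) = (2k \wedge \neg(n-k)) \vee (k \wedge \neg n)$, and then use the equivalence $F(n,k)=1 \iff g(n,k)=0$ noted after Definition 1. Each identity then becomes a statement about the binary expansion of $g$. Throughout I may restrict to $n \geq k$, since the opposite range is covered by the first bullet of Theorem 4.

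First I would dispose of the two vanishing identities by exhibiting a single bit of $g$ forced to be $1$. In $g(4n+1, 4k+1)$, the first disjunct is $(8k+2) \wedge \neg(4(n-k))$; bit $1$ of $8k+2$ equals $1$ while bit $1$ of $4(n-k)$ equals $0$, so bit $1$ of this disjunct (and hence of $g$) equals $1$. The same argument applied to $g(4n+3, 4k+3)$ works because $8k+6$ still has bit $1$ equal to $1$, and $(4n+3)-(4k+3) = 4(n-k)$ still has bit $1$ equal to $0$. Hence both $F$-values are zero.

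Next I would establish the two equalities by showing that $g$ at the new arguments is a power-of-two multiple of $g$ at the old arguments, so that vanishing is preserved. For $F(4n+3, 4k+1) = F(n,k)$, a bit-level expansion shows that the low two bits of both $(8k+2) \wedge \neg(4(n-k)+2)$ and $(4k+1) \wedge \neg(4n+3)$ vanish, and the bits at position $2$ and above reproduce $2k \wedge \neg(n-k)$ and $k \wedge \neg n$ respectively, shifted up by two. Hence $g(4n+3, 4k+1) = 4\,g(n,k)$. For $F(4n+3, 4k+2) = F(2n+1, 2k+1)$ the analogous computation yields $g(4n+3, 4k+2) = 2\,g(2n+1, 2k+1)$; in both cases the two $g$-values vanish simultaneously.

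The main obstacle will be the last identity, where the bit shift is by one rather than two and there is an awkward interaction between the ``$+2$'' in $4k+2$ and the ``$+1$'' in $(4n+3)-(4k+2) = 4(n-k)+1$. In particular, bit $2$ of $g(4n+3, 4k+2)$ picks up a contribution $\neg(n-k)_0$ from the first disjunct, which has to be matched against the corresponding bit $1$ contribution of $g(2n+1, 2k+1)$ coming from the ``$+2$'' in $2(2k+1)=4k+2$. Once the binary expansions of all four arguments are laid out position by position the verification is mechanical, but this is the step where a bookkeeping error is most likely.
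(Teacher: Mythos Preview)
Your proposal is correct and follows essentially the same route as the paper: specialize $g$, exhibit a forced nonzero bit for the two vanishing cases, and show $g(4n+3,4k+1)=4\,g(n,k)$ and $g(4n+3,4k+2)=2\,g(2n+1,2k+1)$ for the two equalities. The ``awkward interaction'' you flag in the last identity is handled in the paper by the one-line observation $(8k+4)\wedge\neg(4(n-k)+1)=(8k+4)\wedge\neg(4(n-k))$ (since bit~$0$ of $8k+4$ vanishes), after which the factor of $2$ is immediate; your bit-by-bit matching of the $\neg(n-k)_0$ contribution amounts to the same thing.
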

\begin{proof}
First, $g(4n+1,4k+1) = (8k+2 \wedge \neg(4(n-k))) \vee (4k+1 \wedge \neg 4n+1)  \neq 0$, i.e
$F(4n+1,4k+1) = 0$. 

Next, $g(4n+3,4k+1) = (8k+2 \wedge \neg(4(n-k)+2)) \vee (4k+1 \wedge \neg 4n+3) = (4(2k\vee n-k)) \vee 4(k\wedge \neg n) = 4g(n,k)$, i.e. $F(4n+3,4k+1) = F(n,k)$.

Note that $(4k+2 \wedge \neg 4n+3) = (4k \wedge \neg 4n) = 2(2k \wedge \neg 2n)$ and $(2k+1 \wedge \neg 2n+1) = (2k \wedge \neg 2n)$.
Similarly $(4k+3 \wedge \neg 4n+3) = (4k \wedge \neg 4n) = 2(2k+1 \wedge \neg 2n+1)$.

Furthermore, $g(4n+3,4k+2) = (8k+4 \wedge \neg(4(n-k)+1)) \vee (4k+2 \wedge \neg 4n+3) = 2[(4k+2 \wedge \neg 2(n-k)) \vee (2k+1 \wedge \neg 2n+1)] = 2g(2n+1,2k+1)$
where we have used the fact that  $(8k+4 \wedge \neg(4(n-k)+1)) =  (8k+4 \wedge \neg(4(n-k)))$. This implies that $F(4n+3,4k+2) = F(2n+1,2k+1)$. 

Finally, $g(4n+3,4k+3) = (8k+6 \wedge \neg(4(n-k))) \vee (4k+3 \wedge \neg 4n+3)$. Since $8k+6 \wedge \neg 4(n-k) \neq 0$, this implies that $F(4n+3,4k+3) = 0$.
\end{proof}

\begin{theorem} \label{thm:fibonacci}
Let $a(n) =  \sum_{k=0}^n \left[\left(\begin{array}{c}n-k\\2k\end{array}\right) \left(\begin{array}{c}n\\k\end{array}\right) \mod 2\right]$. Then
$a(n)$ satisfies the equations $a(0) =1$, $a(2n) = a(n)$, $a(4n+1) = a(n)$ and $a(4n+3) = a(2n+1) + a(n)$.
In particular, $a(n)$ is the run length transform of the Fibonacci sequence $1,1,2,3,5,8,13,\dots$.
\end{theorem}

\begin{proof}
By Lemma \ref{lem:a2n}, $a(0) =1$ and $a(2n) = a(n)$.
Next, by Lemma \ref{lem:a2n} and Lemma \ref{lem:fandgfib}, we obtain
$a(4n+1) = a(n)$.
Similarly $a(4n+3) = a(n) + \sum_{m=1}^{3} \sum_{k=0}^{n} F(4n+3,4k+m) = a(n) + \sum_{k=0}^{n} F(n,k) + F(2n+1,2k+1) = a(2n+1)+a(n)$.
By Theorem \ref{thm:rlt}, $a(n)$ is the run length transform of the Fibonacci sequence 1,1,2,3,5,8,13,...
\end{proof}
Note that in this case $a(n)$ corresponds to OEIS sequence \OEIS{A246028}.
Other values of $a_i$ can also generate the same sequence. For instance, it can be shown that the values of   $\sum_{k=0}^n \left[\left(\begin{array}{c}2k\\n-k\end{array}\right) \left(\begin{array}{c}n\\k\end{array}\right) \mod 2\right]$ , $\sum_{k=0}^n \left[\left(\begin{array}{c}n+3k\\2k\end{array}\right) \left(\begin{array}{c}n\\k\end{array}\right)\mod 2\right]$, and of 
$$\sum_{k=0}^n \left[\left(\begin{array}{c}n+3k\\n+k\end{array}\right) \left(\begin{array}{c}n\\k\end{array}\right) \mod 2\right]$$ all correspond to the run length transform of the Fibonacci sequence as well.

\subsection{Run length transform of the truncated Fibonacci sequence}

Next, consider the case $a_1 = a_3 = 0$, $a_2 =  3$, $a_4 = 1$.

\begin{lemma} \label{lem:fandgfibt}
For $a_1 = a_3 = 0$, $a_2 =  3$, $a_4 = 1$, the following relations hold for the function $F$:
\begin{itemize}
\item $F(4n+1,4k+1) =  F(4n+3,4k+1)  =  F(n,k)$,
\item $F(4n+3,4k+2) = F(2n+1,2k+1)$,
\item $F(4n+3,4k+3) = 0$.
\end{itemize}
\end{lemma}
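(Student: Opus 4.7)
The plan is to apply the $F$-to-$g$ dictionary from Definition~\ref{def:F} and Theorem~\ref{thm:two}: under $a_1=a_3=0$, $a_2=3$, $a_4=1$, we have
\[ g(n,k) = (k \wedge \neg 3k) \vee (k \wedge \neg n), \]
and $F=1$ iff $g=0$. For each bullet, I will compute $g$ at the prescribed arguments by direct bit manipulation, isolate the low-order bits (which are forced either to vanish or to contain a $1$), and compare the surviving upper bits with $g(n,k)$ or $g(2n+1,2k+1)$. Since multiplying by a power of $2$ preserves the zero/nonzero dichotomy, any identity of the form $g(\text{lhs}) = 2^t g(\text{rhs})$ immediately yields $F(\text{lhs}) = F(\text{rhs})$.

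For $F(4n+1,4k+1)$ and $F(4n+3,4k+1)$, substituting $k\mapsto 4k+1$ gives $3k\mapsto 12k+3$. The low two bits of $12k+3$ are $11$ and of $4k+1$ are $01$, so $(4k+1)\wedge\neg(12k+3)$ vanishes on the low two bits and equals $4(k\wedge\neg 3k)$ above them. Likewise $(4k+1)\wedge\neg(4n+1) = 4(k\wedge\neg n)$, and $(4k+1)\wedge\neg(4n+3) = 4(k\wedge\neg n)$ (the extra bit in the bottom of $4n+3$ sits where $4k+1$ is already $0$). Assembling these gives $g(4n+1,4k+1) = g(4n+3,4k+1) = 4g(n,k)$.

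For $F(4n+3,4k+2)=F(2n+1,2k+1)$, I would write $12k+6 = 2(6k+3)$ and $4k+2 = 2(2k+1)$, peeling one factor of $2$ from every clause. A short calculation gives $(2k+1)\wedge\neg(6k+3) = 2(k\wedge\neg(3k+1))$, so $(4k+2)\wedge\neg(12k+6) = 2\,[(2k+1)\wedge\neg(6k+3)]$. Similarly $(4k+2)\wedge\neg(4n+3) = 4(k\wedge\neg n) = 2\,[(2k+1)\wedge\neg(2n+1)]$. Combining yields $g(4n+3,4k+2) = 2\,g(2n+1,2k+1)$, hence the stated equality of $F$-values.

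For $F(4n+3,4k+3)=0$, I need a single surviving $1$-bit in $(4k+3)\wedge\neg(12k+9)$. Reducing modulo $4$: $12k+9\equiv 1$ and $4k+3\equiv 3$, so the second-lowest bit of the AND is $1\wedge\neg 0 = 1$. Thus $g(4n+3,4k+3)\ne 0$ and $F$ vanishes. The main source of obstacle is the bookkeeping in the $F(4n+3,4k+2)$ step, where I must track two separate factor-of-$2$ shifts and match $6k+3$ against $3(2k+1)$ carefully; once that is set up, the other cases reduce to short mod-$4$ checks.
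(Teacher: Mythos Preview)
Your proof is correct and follows essentially the same route as the paper: you compute $g$ at each of the specified arguments, strip off the forced low-order bits, and identify the remaining expression as a power-of-$2$ multiple of $g(n,k)$ or $g(2n+1,2k+1)$ (or exhibit a nonzero low bit to force $F=0$). The only cosmetic difference is that in the last case you work with the correct value $3(4k+3)=12k+9$, whereas the paper's printed argument has a typo ($12k+6$) at that spot; your mod-$4$ check is the cleaner way to see the obstruction.
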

\begin{proof}
$g(4n+1,4k+1) = (4k+1 \wedge \neg(12k+3)) \vee (4k+1 \wedge \neg 4n+1) =   (4k \wedge \neg 12k) \vee (4k \wedge \neg 4n) $, i.e
$F(4n+1,4k+1) = F(n,k)$. 

$g(4n+3,4k+1) = (4k+1 \wedge \neg(12k+3)) \vee (4k+1 \wedge \neg 4n+3) = (4k \wedge \neg 12k) \vee (4k \wedge \neg 4n) $ and $F(4n+3,4k+1) = F(n,k)$.

Note that $(4k+2 \wedge \neg 4n+3) = (4k \wedge \neg 4n) = 2(2k \wedge \neg 2n)$ and $(2k+1 \wedge \neg 2n+1) = (2k \wedge \neg 2n)$.
Similarly $(4k+3 \wedge \neg 4n+3) = (4k \wedge \neg 4n) = 2(2k+1 \wedge \neg 2n+1)$.

$g(4n+3,4k+2) = (4k+2 \wedge \neg(12k+6)) \vee (4k+2 \wedge \neg 4n+3) = 2[(2k+1 \wedge \neg 6k+3) \vee (2k+1 \wedge \neg 2n+1)]$. This implies that $F(4n+3,4k+2) = F(2n+1,2k+1)$. 

$g(4n+3,4k+3) = (4k+3 \wedge \neg(12k+6)) \vee (4k+3 \wedge \neg 4n+3)$. Since $4k+3 \wedge \neg 12k+6 \neq 0$
This implies that $F(4n+3,4k+3) = 0$
\end{proof}

\begin{theorem} \label{thm:fibonacci_truncated}
Let $a(n) =  \sum_{k=0}^n \left[\left(\begin{array}{c}3k\\k\end{array}\right) \left(\begin{array}{c}n\\k\end{array}\right) \mod 2\right]$. Then
$a(n)$ satisfies the equations $a(0) = 1$, $a(2n) = a(n)$, $a(4n+1) = 2a(n)$ and $a(4n+3) = a(2n+1) + a(n)$.
In particular, $a(n)$ is the run length transform of the truncated Fibonacci sequence $1,2,3,5,8,13,\dots$.
\end{theorem}

\begin{proof}
By Lemma \ref{lem:a2n}, $a(0) = 1$ and $a(2n) = a(n)$.
Next, 
 by Lemma \ref{lem:a2n} and Lemma \ref{lem:fandgfibt},
$a(4k+1)= 2a(n)$.
Similarly, $a(4n+3) = a(n) + \sum_{m=1}^{3} \sum_{k=0}^{n} F(4n+3,4k+m) =a(n) + \sum_{k=0}^{n} F(n,k) + F(2n+1,2k+1)=  a(2n+1)+a(n)$.
By Theorem \ref{thm:rlt}, $a(n)$ is the run length transform of the truncated Fibonacci sequence $1,2,3,5,8,13,\dots$.
\end{proof}
Note that in this case $a(n)$ corresponds to OEIS sequence \OEIS{A245564}. This sequence is also equal to 
 $\sum_{k=0}^n \left[\left(\begin{array}{c}3k2^m\\k2^m\end{array}\right) \left(\begin{array}{c}n\\k\end{array}\right) \mod 2\right]$ and
 $$\sum_{k=0}^n \left[\left(\begin{array}{c}3k2^m\\2k2^m\end{array}\right) \left(\begin{array}{c}n\\k\end{array}\right) \mod 2\right]$$ for all integers $m\geq 0$.

\subsection{Run length transform of $\{1,1,2,4,8,16,32,\dots\}$}
Consider the case $a_1 = 1$, $a_2 = a_3 = 0$, $a_4 = 2$.

\begin{lemma} \label{lem:fandgpow2}
For  $a_1 = 1$, $a_2 = a_3 = 0$, $a_4 = 2$, the following relations hold for the function $F$:
\begin{itemize}
\item $F(4n+1,4k+1) = 0$,
\item $F(4n+3,4k+1)  =  F(n,k)$,
\item $F(4n+3,4k+2) = F(4n+3,4k+3)  = F(2n+1,2k+1)$.
\end{itemize}
\end{lemma}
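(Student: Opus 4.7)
The plan is to mirror the style of Lemmas~\ref{lem:fandgfib} and~\ref{lem:fandgfibt}. With $a_1=1$, $a_2=a_3=0$, $a_4=2$ the auxiliary function specializes to $g(n,k)=(2k\wedge\neg n)\vee(k\wedge\neg n)$, so for each identity I will write out $g$ at the shifted arguments and inspect the two or three least significant bits. Either some low bit is forced to $1$, in which case $g\neq 0$ and Theorem~\ref{thm:two} gives $F=0$; or else the low bits all vanish after peeling off a power-of-two factor and the surviving high bits recover $g(n,k)$ or $g(2n+1,2k+1)$, giving the claimed reduction.

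For $F(4n+1,4k+1)=0$, the relevant expression is $g(4n+1,4k+1)=((8k+2)\wedge\neg(4n+1))\vee((4k+1)\wedge\neg(4n+1))$. Bit~$1$ of $8k+2$ is $1$ while bit~$1$ of $4n+1$ is $0$, so bit~$1$ of the first disjunct is $1$ and $g\neq 0$.

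For $F(4n+3,4k+1)=F(n,k)$, bits $0$ and $1$ of $4n+3$ are both $1$, killing the bottom two bits of both disjuncts in $g(4n+3,4k+1)$. Bit~$2$ of $8k+2$ is $0$ and bit~$2$ of $4k+1$ is bit~$0$ of $k$, which meets bit~$2$ of $\neg(4n+3)=\neg(\text{bit 0 of }n)$, contributing the lowest bit of $k\wedge\neg n$. A parallel analysis at positions $\geq 3$ shows that the $8k+2$ disjunct supplies the bits of $2k\wedge\neg n$ and the $4k+1$ disjunct supplies the bits of $k\wedge\neg n$, all shifted up by~$2$. Hence $g(4n+3,4k+1)=4g(n,k)$, proving the equality.

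For $F(4n+3,4k+2)=F(4n+3,4k+3)=F(2n+1,2k+1)$, the same bit-by-bit analysis applies, but now the numerators $8k+4$ and $8k+6$ both carry a $1$ in bit~$2$. This bit pairs with bit~$2$ of $\neg(4n+3)=\neg(\text{bit 0 of }n)$, so $g$ can only vanish when bit~$0$ of $n$ equals~$1$, i.e.\ when $n$ is odd. Writing out $g(2n+1,2k+1)=((4k+2)\wedge\neg(2n+1))\vee((2k+1)\wedge\neg(2n+1))$ and performing the analogous reduction, one finds the same parity requirement in bit~$1$ and the identical high-bit contribution coming from $g(n,k)$ (suitably shifted), so $g(4n+3,4k+2)$ and $g(4n+3,4k+3)$ vanish exactly when $g(2n+1,2k+1)$ does. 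The main obstacle is careful bookkeeping of low-order bits: the three second arguments $8k+2$, $8k+4$, $8k+6$ differ only in bits $1$ and~$2$, yet those are precisely the positions where $\neg(4n+3)$ interacts with the parity of~$n$, and an off-by-one error would either collapse the two disjuncts incorrectly or miss the fact that the $4k+2$ and $4k+3$ cases reduce to the $(2n+1,2k+1)$ target rather than to $(n,k)$.
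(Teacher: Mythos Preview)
Your treatment of $F(4n+1,4k+1)=0$ and $F(4n+3,4k+1)=F(n,k)$ is correct and matches the paper's argument: in both cases you are computing $g$ at the shifted arguments, reading off the low bits, and recovering $4g(n,k)$ in the second case.

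The third item is where your argument becomes shaky. You claim that, after enforcing the parity condition on bit~$0$ of~$n$, both $g(4n+3,4k+2)$ and $g(2n+1,2k+1)$ have ``the identical high-bit contribution coming from $g(n,k)$ (suitably shifted)''. That is not accurate: the high bits of $g(2n+1,2k+1)$ do \emph{not} reduce to a shift of $g(n,k)$ in general (the two disjuncts $(4k+2)\wedge\neg(2n+1)$ and $(2k+1)\wedge\neg(2n+1)$ interact with different bit positions of $n$). So as written, the argument does not establish the equivalence of vanishing.

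The paper avoids this entirely by a single observation: since $8k+4$ and $4k+2$ both have bit~$0$ equal to~$0$, one may replace $\neg(4n+3)$ by $\neg(4n+2)$ in each disjunct, and then
\[
g(4n+3,4k+2)=\bigl((8k+4)\wedge\neg(4n+2)\bigr)\vee\bigl((4k+2)\wedge\neg(4n+2)\bigr)=2\,g(2n+1,2k+1),
\]
with the analogous one-line reduction for $4k+3$ (where $8k+6$ and $4k+3$ both have bit~$1$ killed by $\neg(4n+3)$ before one factors out~$2$). This gives $F(4n+3,4k+2)=F(4n+3,4k+3)=F(2n+1,2k+1)$ immediately, without any separate parity or high-bit bookkeeping. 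Replacing your last paragraph with this direct factoring would close the gap.
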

\begin{proof}
$g(4n+1,4k+1) = (8k+2 \wedge \neg(4n+1)) \vee (4k+1 \wedge \neg 4n+1) \neq 0$, i.e
$F(4n+1,4k+1) = 0$. 

$g(4n+3,4k+1) = (8k+2 \wedge \neg(4n+3)) \vee (4k+1 \wedge \neg 4n+3) = 4[(2k \wedge \neg n) \vee (k \wedge \neg n)]$ where we use the fact that
$ (8k+2 \wedge \neg(4n+3))=(8k \wedge \neg(4n))$
and thus $F(4n+3,4n+1) = F(n,k)$.

Note that $(4k+2 \wedge \neg 4n+3) = (4k \wedge \neg 4n) = 2(2k \wedge \neg 2n)$ and $(2k+1 \wedge \neg 2n+1) = (2k \wedge \neg 2n)$.
Similarly $(4k+3 \wedge \neg 4n+3) = (4k \wedge \neg 4n) = 2(2k+1 \wedge \neg 2n+1)$.

$g(4n+3,4k+2) = (8k+4 \wedge \neg(4n+3)) \vee (4k+2 \wedge \neg 4n+3) = 2[(4k+2 \wedge \neg 2n+1) \vee (2k+1 \wedge \neg 2n+1)]$, where we use the fact that
$ (8k+4 \wedge \neg(4n+3))  =  (8k+4 \wedge \neg(4n+2)) $. 
This implies that $F(4n+3,4k+2) = F(2n+1,2k+1)$. 

$g(4n+3,4k+3) = (8k+6 \wedge \neg(4n+3)) \vee (4k+3 \wedge \neg 4n+3) = 2[ (4k+2 \wedge \neg(2n+1)) \vee (2k+1 \wedge \neg 2n+1)]$.This implies that $F(4n+3,4k+3) = F(2n+1,2k+1)$. 

\end{proof}

\begin{theorem} \label{thm:recursivepow2}
Let $a(n) =  \sum_{k=0}^n \left[\left(\begin{array}{c}n\\2k\end{array}\right) \left(\begin{array}{c}n\\k\end{array}\right) \mod 2\right]$. Then
$a(n)$ satisfies the equations $a(0) =1$, $a(2n) = a(n)$, $a(4n+1) = a(n)$ and $a(4n+3) = 2a(2n+1)$.
In particular, $a(n)= \{1, 1, 1, 2, 1, 1, 2, 4, 1, 1, 1, 2, \dots \}$ is the run length transform of the sequence $1,1,2,4,8,16,32,\dots$, i.e. $1$ plus the positive powers of $2$.
\end{theorem}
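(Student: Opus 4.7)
My plan is to mimic the proofs of Theorems \ref{thm:fibonacci} and \ref{thm:fibonacci_truncated} almost verbatim, since all the real work has been packaged into Lemmas \ref{lem:a2n} and \ref{lem:fandgpow2}. First I would check that the parameters $a_1=1$, $a_2=a_3=0$, $a_4=2$ satisfy every hypothesis we need from Lemma \ref{lem:a2n}: both $a_3\in\{0,1\}$ and the disjunction ``$a_1=1$ or $a_3=0$'' hold. This gives $a(0)=1$ and $a(2n)=a(n)$ at once, along with the decompositions
\[
a(4n+1) = a(n) + \sum_{k=0}^{n} F(4n+1,4k+1) \quad\text{and}\quad a(4n+3) = a(n) + \sum_{m=1}^{3}\sum_{k=0}^{n} F(4n+3,4k+m).
\]

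I would then substitute the four identities of Lemma \ref{lem:fandgpow2}. The bullet $F(4n+1,4k+1)=0$ collapses the first equation to $a(4n+1)=a(n)$. For the $4n+3$ decomposition, the remaining bullets convert the $m=1$ piece to $\sum_{k} F(n,k)=a(n)$ and the $m=2$ and $m=3$ pieces each to $\sum_{k} F(2n+1,2k+1)$; applying the auxiliary identity $\sum_{k} F(2n+1,2k+1)=a(2n+1)-a(n)$ noted right after Lemma \ref{lem:a2n} then yields $a(4n+3) = a(n)+a(n)+2\bigl(a(2n+1)-a(n)\bigr) = 2a(2n+1)$, as claimed.

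The final step is to invoke Theorem \ref{thm:rlt} on the correct base sequence. The target $S = 1,1,2,4,8,16,\dots$ has $S_0=S_1=1$ and $S_{n+1}=2S_n$ for $n\ge 1$; writing this in the form $S_{n+1}=c_2S_n+c_3S_{n-1}$ with $S_1=c_1$ forces $c_1=1$, $c_2=2$, $c_3=0$ (since $c_2+c_3=S_2=2$ and $2c_2+c_3=S_3=4$). Theorem \ref{thm:rlt} then predicts exactly the recurrences $T_{4n+1}=T_n$ and $T_{4n+3}=2T_{2n+1}$ that we have established, so $a(n)$ is the run length transform of $S$. The only real subtlety, and the most likely source of error, is this identification: the author explicitly warns that $(c_2,c_3)$ need not be unique for a given $S$, but here the system is determined because the opening values $1,1,2$ do not form a geometric progression, and it is essential to pick the triple whose predicted recurrences match those derived for $a(n)$ --- in particular, $c_3=0$ is forced by the absence of any $a(n)$ contribution in our recurrence for $a(4n+3)$.
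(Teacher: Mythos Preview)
Your proof is correct and follows essentially the same route as the paper's own argument: both invoke Lemma~\ref{lem:a2n} for $a(0)=1$, $a(2n)=a(n)$, and the decompositions of $a(4n+1)$, $a(4n+3)$, then plug in Lemma~\ref{lem:fandgpow2} and use $\sum_k F(2n+1,2k+1)=a(2n+1)-a(n)$ to reach $a(4n+3)=2a(2n+1)$, before appealing to Theorem~\ref{thm:rlt}. Your added care in explicitly solving for $(c_1,c_2,c_3)=(1,2,0)$ and checking that the resulting predicted recurrences match is a welcome clarification the paper leaves implicit.
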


\begin{proof}
By Lemma \ref{lem:a2n}, $a(0) = 1$ and $a(2n) = a(n)$.
Next, by Lemma \ref{lem:a2n} and Lemma \ref{lem:fandgpow2},
$a(4n+1) =  a(n)$.
Similarly,
$a(4n+3) =  a(n) + \sum_{m=1}^{3} \sum_{k=0}^{n} F(4n+3,4k+m) =  2a(n) + 2\sum_{k=0}^{n} F(2n+1,2k+1) = 2a(2n+1)$.
By Theorem \ref{thm:rlt}, $a(n) $ is the run length transform of the sequence $1,1,2,4,8,16,32,\dots$.
\end{proof}

\subsection{Run length transform of $\{1,2,2,2,2,2,\dots\}$}
Consider the case $a_1 = 1$, $a_2 = a_4 = 2$, $a_3 = 0$.

\begin{lemma} \label{lem:fandg1222}
For $a_1 = 1$, $a_2 = a_4 = 2$, $a_3 = 0$, the following relations hold for the function $F$:
\begin{itemize}
\item $F(4n+1,4k+1) = F(n,k)$,
\item $F(4n+3,4k+1)  =  F(4n+3,4k+3) = 0$,
\item $F(4n+3,4k+2) = F(2n+1,2k+1)$.
\end{itemize}
\end{lemma}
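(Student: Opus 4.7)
Substituting $a_1=1$, $a_2=a_4=2$, $a_3=0$ into Definition \ref{def:F} gives $g(n,k) = (2k \wedge \neg(n+2k)) \vee (k \wedge \neg n)$. My plan is to evaluate $g$ at each of the four specialized argument pairs, in the same spirit as Lemmas \ref{lem:fandgfib} and \ref{lem:fandgfibt}, using the integer identities $4n+8k+3 = 4(n+2k)+3$ and $4n+8k+7 = 2(2n+4k+3)+1$ to reduce each expression to a scaled copy of $g$ at smaller arguments.

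For $F(4n+1,4k+1) = F(n,k)$ I expand $g(4n+1,4k+1) = ((8k+2) \wedge \neg(4n+8k+3)) \vee ((4k+1) \wedge \neg(4n+1))$. Bits $0,1,2$ of $8k+2$ read $0,1,0$ and those of $4n+8k+3$ read $1,1,n_0$, so the low three bits of the first disjunct vanish; and using $4n+8k+3 = 4(n+2k)+3$, for $i\geq 2$ the bit at position $i$ of $4n+8k+3$ equals bit $i-2$ of $n+2k$ while bit $i$ of $8k+2$ equals bit $i-2$ of $2k$, so the first disjunct equals $4\,(2k \wedge \neg(n+2k))$. A direct check gives $(4k+1) \wedge \neg(4n+1) = 4(k \wedge \neg n)$, so $g(4n+1,4k+1) = 4\,g(n,k)$ and the first claim follows.

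For the two vanishing claims, $2K$ equals $8k+2$ or $8k+6$, both of which have bit $1$ equal to $1$. Because bit $0$ of $2K$ is $0$, no carry enters bit $1$ of $N+2K = (4n+3)+2K$, and $(4n+3)_1+(2K)_1 = 1+1\equiv 0\pmod 2$, so bit $1$ of $N+2K$ is $0$. Hence $(2K)\wedge \neg(N+2K)$ has a $1$ at bit $1$, so $g(4n+3,4k+1)$ and $g(4n+3,4k+3)$ are nonzero and $F$ vanishes at both pairs.

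For $F(4n+3,4k+2) = F(2n+1,2k+1)$ the target is $g(4n+3,4k+2) = 2\,g(2n+1,2k+1)$. Using $8k+4 = 2(4k+2)$ and $4n+8k+7 = 2(2n+4k+3)+1$, bit $0$ of $(8k+4) \wedge \neg(4n+8k+7)$ is $0$ and bits $\geq 1$ are a shift-up-by-$1$ of $(4k+2) \wedge \neg(2n+4k+3)$; likewise $4k+2 = 2(2k+1)$ and $4n+3 = 2(2n+1)+1$ yield $(4k+2)\wedge \neg(4n+3) = 2[(2k+1)\wedge \neg(2n+1)]$. Combining the two disjuncts gives $g(4n+3,4k+2) = 2\,g(2n+1,2k+1)$, and the third claim follows. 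The only real bookkeeping hazard is the presence of carries inside $n+2k$ and $2n+4k+3$, but these never matter because every reduction rests on an exact integer identity of the form $a = 2b+c$ with $c\in\{0,1\}$, whose shift behaviour is insensitive to the bit pattern of $b$.
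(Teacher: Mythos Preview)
Your proof is correct and follows essentially the same approach as the paper's own argument: in each of the four cases you compute $g$ at the specialized arguments and reduce it bitwise, obtaining $g(4n+1,4k+1)=4g(n,k)$, $g(4n+3,4k+2)=2g(2n+1,2k+1)$, and a nonzero bit of $g$ in the two vanishing cases. The paper does the same computations but more tersely (e.g.\ it simply asserts $(8k+2)\wedge\neg(4n+8k+5)\neq 0$ without naming the offending bit), while you supply the explicit bit-and-carry bookkeeping; the substance is identical.
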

\begin{proof}
$g(4n+1,4k+1) = (8k+2 \wedge \neg(4n+8k+3)) \vee (4k+1 \wedge \neg 4n+1) = 4[(2k \wedge \neg(n+2k)) \vee (k \wedge \neg n)]$, i.e
$F(4n+1,4k+1) = F(n,k)$. 

$g(4n+3,4k+1) = (8k+2 \wedge \neg(4n+8k+5)) \vee (4k+1 \wedge \neg 4n+3) \neq 0$, i.e. $F(4n+3,4k+1)= 0$.

Note that $(4k+2 \wedge \neg 4n+3) = (4k \wedge \neg 4n) = 2(2k \wedge \neg 2n)$ and $(2k+1 \wedge \neg 2n+1) = (2k \wedge \neg 2n)$.
Similarly $(4k+3 \wedge \neg 4n+3) = (4k \wedge \neg 4n) = 2(2k+1 \wedge \neg 2n+1)$.

$g(4n+3,4k+2) = (8k+4 \wedge \neg(4n+8k+7)) \vee (4k+2 \wedge \neg 4n+3) = 2[(4k+2 \wedge \neg 2n+4k+3) \vee (2k+1 \wedge \neg 2n+1)]$
where we use $(8k+4 \wedge \neg(4n+8k+7)) = (8k+4 \wedge \neg(4n+8k+6))$.
This implies that $F(4n+3,4k+2) = F(2n+1,2k+1)$. 

$g(4n+3,4k+3) = (8k+6 \wedge \neg(4n+8k+9)) \vee (4k+3 \wedge \neg 4n+3) \neq 0$, i.e. $F(4n+3,4k+3) = 0$. 

\end{proof}

\begin{theorem} \label{thm:recursive1222}
Let $a(n) =  \sum_{k=0}^n \left[\left(\begin{array}{c}n+2k\\2k\end{array}\right) \left(\begin{array}{c}n\\k\end{array}\right) \mod 2\right]$. Then
$a(n)$ satisfies the equations $a(0) =1$, $a(2n) = a(n)$, $a(4n+1) = 2a(n)$ and $a(4n+3) = a(2n+1)$.
In particular, $a(n) = \{1, 2, 2, 2, 2, 4, 2, 2, 2, 4, \dots \}$ is the run length transform of the sequence $1,2,2,2,2,2,2,\dots$ (OEIS \OEIS{A040000}).
\end{theorem}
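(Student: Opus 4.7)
The plan is to mirror the structure of Theorems \ref{thm:fibonacci}, \ref{thm:fibonacci_truncated} and \ref{thm:recursivepow2}: use the general recurrences for $a(n)$ from Lemma \ref{lem:a2n} and substitute the case-specific simplifications from Lemma \ref{lem:fandg1222} to collapse the sums. Since we are in the regime $a_1=1$ (with $a_3=0$ also holding), every clause of Lemma \ref{lem:a2n} that requires ``$a_1=1$ or $a_3=0$'' is available.

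First I would write down the two easy cases: $a(0)=1$ is immediate from Lemma \ref{lem:a2n}, and $a(2n)=a(n)$ is the second clause of the same lemma. Next, for $a(4n+1)$ I would start from
$a(4n+1) = a(n) + \sum_{k=0}^{n} F(4n+1,4k+1)$
and use $F(4n+1,4k+1)=F(n,k)$ from Lemma \ref{lem:fandg1222} to rewrite the sum as $\sum_{k=0}^n F(n,k) = a(n)$, yielding $a(4n+1) = 2a(n)$.

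For $a(4n+3)$ I would start from the expansion
$a(4n+3) = a(n) + \sum_{m=1}^{3}\sum_{k=0}^{n} F(4n+3,4k+m)$
given by Lemma \ref{lem:a2n}. By Lemma \ref{lem:fandg1222}, the $m=1$ and $m=3$ inner sums vanish term by term, while the $m=2$ sum equals $\sum_{k=0}^{n} F(2n+1,2k+1)$. Using the displayed consequence of Lemma \ref{lem:a2n} that $\sum_{k=0}^{n} F(2n+1,2k+1) = a(2n+1) - a(n)$ (valid here because $a_1=1$), the $a(n)$ terms cancel and we obtain $a(4n+3) = a(2n+1)$.

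Finally I would invoke Theorem \ref{thm:rlt} to identify the base sequence. Matching $T_{4n+1} = c_1 T_n$ with $a(4n+1)=2a(n)$ forces $c_1 = 2$, and matching $T_{4n+3} = c_2 T_{2n+1} + c_3 T_n$ with $a(4n+3) = a(2n+1)$ forces $c_2=1$, $c_3=0$. The associated $S_n$ then satisfies $S_0=1$, $S_1=c_1=2$, and $S_{n+1} = S_n$, which is precisely $\{1,2,2,2,\ldots\}$; this establishes that $a(n)$ is its run length transform. There is no real obstacle here: the proof is routine once Lemmas \ref{lem:a2n} and \ref{lem:fandg1222} are in hand, and the only place to be careful is correctly reading off $c_1, c_2, c_3$ so that the resulting $S_n$ matches OEIS A040000.
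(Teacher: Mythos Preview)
Your proposal is correct and follows essentially the same route as the paper's proof: invoke Lemma~\ref{lem:a2n} for $a(0)$, $a(2n)$, and the expansions of $a(4n+1)$ and $a(4n+3)$; plug in the identities from Lemma~\ref{lem:fandg1222}; use $\sum_k F(2n+1,2k+1)=a(2n+1)-a(n)$ to collapse the $a(4n+3)$ case; and finish with Theorem~\ref{thm:rlt}. Your explicit identification of $c_1=2$, $c_2=1$, $c_3=0$ is a bit more detailed than the paper's terse appeal to Theorem~\ref{thm:rlt}, but the argument is the same.
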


\begin{proof}
By Lemma \ref{lem:a2n}, $a(0) = 1$, $a(2n) = a(n)$.
Next, 
by Lemma \ref{lem:a2n} and Lemma \ref{lem:fandg1222},
$a(4n+1) = a(n) + \sum_{k=0}^{n} F(n,k) = 2a(n)$.
Similarly, $a(4n+3) =  a(n) + \sum_{m=1}^{3} \sum_{k=0}^{n} F(4n+3,4k+m) = a(n) + \sum_{k=0}^{n}  F(2n+1,2k+1) = a(2n+1)$.
By Theorem \ref{thm:rlt}, $a(n)$ is the run length transform of the Fibonacci sequence $1,2,2,2,2,\dots$.
\end{proof}
This sequence is also generated by $  \sum_{k=0}^n \left[ \left(\begin{array}{c}n+2k\\n\end{array}\right) \left(\begin{array}{c}n\\k\end{array}\right) \mod 2\right]$.

\subsection{Run length transform of the positive integers}
OEIS sequence \OEIS{A106737} is defined as $a(n) = \sum_{k=0}^{n} \left[ \left(\begin{array}{c}n+k\\n-k\end{array}\right) \left(\begin{array}{c}n\\k\end{array}\right) \mod 2\right]$.  It was noted that the following recursive relationships appear to hold:
$a(2n) = a(n)$, $a(4n+1) = 2a(n)$ and $a(4n+3) = 2a(2n+1) - a(n)$.  In this section we show that this is indeed the case.

Let $a_1, a_2, a_3 = 1$ and $a_4 = -1$, i.e. $F(n,k) = \left(\begin{array}{c}n+k\\n-k\end{array}\right) \left(\begin{array}{c}n\\k\end{array}\right) \mod 2$ and $g(n,k) = ((n-k) \wedge \neg (n+k)) \vee (k \wedge \neg n)$.

\begin{lemma}\label{lem:fandg2}
For  $a_1, a_2, a_3 = 1$ and $a_4 = -1$, the following relations hold for the function $F$:
\begin{itemize}
\item $F(4n+1,4k+1) = F(n,k)$,
\item $F(4n+3,4k+1) = 0$,
\item $F(4n+3,4k+2) = F(4n+3,4k+3) = F(2n+1,2k+1)$.
\end{itemize}
\end{lemma}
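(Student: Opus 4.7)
The plan is to verify each of the three identities by a direct bit-level computation of $g$ at the indicated arguments, exploiting that $4n+r$ and $4k+s$ agree with $4n$ and $4k$ on bits of index $\ge 2$, so every bitwise operation splits into a contribution on bits~$0,1$ and a contribution above. By Theorem~\ref{thm:two}, $F=1\Leftrightarrow g=0$, so matching $g$-values up to a positive power-of-two factor gives the claimed $F$-identities.

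For the first two cases, I write $(4n+1)\pm(4k+1)$ explicitly: no carry arises, and the sum is $4(n+k)+2$ while the difference is $4(n-k)$. On bits~$0,1$ the first disjunct of $g(4n+1,4k+1)$ is $00\wedge 01=00$ and the second is $01\wedge 10=00$; above bit~$2$ we recover $(n-k)\wedge\neg(n+k)$ and $k\wedge\neg n$, so $g(4n+1,4k+1)=4g(n,k)$. For $F(4n+3,4k+1)=0$ the key difference is that $(4n+3)-(4k+1)=4(n-k)+2$ has bit~$1$ equal to~$1$ while $(4n+3)+(4k+1)=4(n+k+1)$ has bit~$1$ equal to~$0$, so the AND at bit~$1$ is~$1$, forcing $g\ge 2>0$.

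For $F(4n+3,4k+r)=F(2n+1,2k+1)$ with $r\in\{2,3\}$, the sum $(4n+3)+(4k+r)$ equals $4(n+k+1)+(r-1)$ and the difference equals $4(n-k)+(3-r)$; the arithmetic carry replaces $n+k$ by $n+k+1$ on the high bits. A bit-$0,1$ check shows both disjuncts of $g(4n+3,4k+r)$ vanish on the low two bits, so $g(4n+3,4k+r)=4g'(n,k)$ where $g'(n,k):=((n-k)\wedge\neg(n+k+1))\vee(k\wedge\neg n)$. A parallel computation of $g(2n+1,2k+1)$ yields $2g'(n,k)$, so all three quantities vanish simultaneously. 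The main obstacle is exactly this carry: the natural reduction is not to $g(n,k)$ but to the auxiliary $g'(n,k)$, and noticing that the same $g'$ also governs $g(2n+1,2k+1)$ is what closes up the last identity.
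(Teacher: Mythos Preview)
Your proof is correct and follows essentially the same bit-level computation as the paper's own proof: both compute $g$ at the indicated arguments, check the low bits directly, and identify the high-bit contribution with $g$ (or its shift) at the target arguments. The only cosmetic difference is that you introduce an auxiliary $g'(n,k)=((n-k)\wedge\neg(n+k+1))\vee(k\wedge\neg n)$ and factor $g(4n+3,4k+r)=4g'(n,k)$ and $g(2n+1,2k+1)=2g'(n,k)$ separately, whereas the paper factors out only a $2$ from $g(4n+3,4k+r)$ and recognizes the bracket directly as $g(2n+1,2k+1)$; the content is identical.
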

\begin{proof}

$g(4n+1,4k+1) = (4(n-k) \wedge \neg(4(n+k)+2)) \vee (4k+1 \wedge \neg 4n+1) =  4((n-k) \wedge \neg(n+k)) \vee 4(k \wedge \neg n) $, i.e.
$F(4n+1,4k+1) = F(n,k)$.

$g(4n+3,4k+1) = (4(n-k)+2 \wedge \neg(4(n+k+1))) \vee (4k+1 \wedge \neg 4n+3) > 0 $ since $(4(n-k)+2 \wedge \neg(4(n+k+1))) > 0$, i.e. $F(4n+3,4k+1) = 0$.

Note that $(4k+2 \wedge \neg 4n+3) = (4k \wedge \neg 4n) = 2(2k \wedge \neg 2n)$ and $(2k+1 \wedge \neg 2n+1) = (2k \wedge \neg 2n)$.
Similarly $(4k+3 \wedge \neg 4n+3) = (4k \wedge \neg 4n) = 2(2k+1 \wedge \neg 2n+1)$.

$g(4n+3,4k+2) = (4(n-k)+1 \wedge \neg(4(n+k+1)+1)) \vee (4k+2 \wedge \neg 4n+3) = 2[(2(n-k) \wedge \neg 2(n+k+1)) \vee (2k+1 \wedge \neg 2n+1)]$ where we
use $4(n-k)+1 \wedge \neg(4(n+k+1)+1) = 4(n-k) \wedge \neg(4(n+k+1))$. 
This implies that $F(4n+3,4k+2) = F(2n+1,2k+1)$. 

$g(4n+3,4k+3) = (4(n-k) \wedge \neg(4(n+k+1)+2)) \vee (4k+3 \wedge \neg 4n+3) = 2[(2(n-k) \wedge \neg 2(n+k+1)) \vee (2k+1 \wedge \neg 2n+1)]$ where we use
$4(n-k) \wedge \neg(4(n+k+1)+2) = 4(n-k) \wedge \neg(4(n+k+1))$, and thus
$F(4n+3,4k+3) = F(2n+1,2k+1)$.
\end{proof}

\begin{theorem} \label{thm:recursive2}
For OEIS sequence \OEIS{A106737},
$a(0) = 1$, $a(2n) = a(n)$, $a(4n+1) = 2a(n)$ and $a(4n+3) = 2a(2n+1) - a(n)$. Furthermore, $a(n)$ is the run length transform of the positive integers.
\end{theorem}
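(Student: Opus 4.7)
The plan is to combine Lemma \ref{lem:a2n} with Lemma \ref{lem:fandg2} exactly as in the preceding subsections, extract a four-term recurrence for $a(n)$, and then match this recurrence against Theorem \ref{thm:rlt} to identify the underlying sequence as the positive integers.

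First, since $a_1 = 1$, the hypothesis ``$a_1=1$ or $a_3=0$'' of Lemma \ref{lem:a2n} is satisfied, so I immediately get $a(0)=1$ and $a(2n)=a(n)$. For the odd indices, I would start from the expressions $a(4n+1) = a(n) + \sum_{k=0}^{n} F(4n+1,4k+1)$ and $a(4n+3) = a(n) + \sum_{m=1}^{3}\sum_{k=0}^{n} F(4n+3,4k+m)$ provided by Lemma \ref{lem:a2n}, and substitute the relations from Lemma \ref{lem:fandg2}. The first sum collapses to $\sum_{k=0}^{n} F(n,k) = a(n)$, giving $a(4n+1) = 2a(n)$. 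For the second, the $m=1$ terms vanish while the $m=2$ and $m=3$ terms each equal $F(2n+1,2k+1)$, whose sum over $k$ equals $a(2n+1)-a(n)$ by the identity noted immediately after Lemma \ref{lem:a2n}. Hence $a(4n+3) = a(n) + 2\bigl(a(2n+1)-a(n)\bigr) = 2a(2n+1) - a(n)$.

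For the run length transform identification, I observe that the positive integers $S_n = n+1$ satisfy $S_0 = 1$, $S_1 = 2$, and the linear recurrence $S_{n+1} = 2S_n - S_{n-1}$. Applying Theorem \ref{thm:rlt} with $c_1 = 2$, $c_2 = 2$, $c_3 = -1$ yields a sequence $T_n$ determined by $T_0 = 1$, $T_{2n} = T_n$, $T_{4n+1} = 2T_n$, $T_{4n+3} = 2T_{2n+1} - T_n$, which is precisely the recurrence just established for $a(n)$. Since this recurrence together with $a(0)=1$ uniquely specifies the sequence, I conclude that $a(n)$ coincides with the run length transform of the positive integers.

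The main obstacle, namely the bitwise algebra needed to simplify the $g$-function at arguments of the form $(4n+\epsilon, 4k+\delta)$, has already been dispatched inside Lemma \ref{lem:fandg2}; what remains is essentially combinatorial bookkeeping that parallels the Fibonacci, truncated Fibonacci, and $\{1,1,2,4,\ldots\}$ cases treated earlier in the paper.
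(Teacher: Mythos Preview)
Your proposal is correct and follows essentially the same route as the paper: invoke Lemma~\ref{lem:a2n} for the base case and the even/odd decomposition, feed in the identities of Lemma~\ref{lem:fandg2}, use the relation $\sum_{k=0}^{n} F(2n+1,2k+1)=a(2n+1)-a(n)$, and then match against Theorem~\ref{thm:rlt}. The only minor point is that Lemma~\ref{lem:a2n}'s odd-index formulas also require $a_3\in\{0,1\}$, which holds here since $a_3=1$; you might mention this alongside the $a_1=1$ check.
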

\begin{proof}
As before, by Lemma \ref{lem:a2n}, $a(0) = 1$ and $a(2n) = a(n)$. Next 
by Lemma \ref{lem:a2n} and Lemma \ref{lem:fandg2},  $a(4n+1) = a(n) + \sum_{k=0}^{n} F(n,k) = 2a(n)$.
Similarly, $a(4n+3) = a(n) +  \sum_{k=0}^{n}  F(2n+1,2k+1) + F(2n+1,2k+1) = 2a(2n+1)-a(n)$.
By Theorem \ref{thm:rlt}, $a(n)$ is the run length transform of the positive integers $1,2,3,4, \dots$
\end{proof}
This sequence is also generated by each of the following expressions: 

$\sum_{k=0}^{n} \left[\left(\begin{array}{c}n+k\\2k\end{array}\right) \left(\begin{array}{c}n\\k\end{array}\right) \mod 2\right]$, $\sum_{k=0}^{n} \left[\left(\begin{array}{c}n+2k\\k\end{array}\right) \left(\begin{array}{c}n\\k\end{array}\right) \mod 2\right]$ and 

$\sum_{k=0}^{n} \left[ \left(\begin{array}{c}n+2k\\n+k\end{array}\right) \left(\begin{array}{c}n\\k\end{array}\right) \mod 2\right]$.

\subsection{A fixed point of the run length transform}
The all ones sequence $1,1,1,\dots$ (OEIS sequence \OEIS{A000012}) is a fixed point of the run length transform. We next show that it is also expressible as sums of products of binomial coefficients mod 2.
To prove this, we look at the case $a_1 = a_4 = 1$, $a_2 = -1$,  $a_3 = 0$.

\begin{lemma} \label{lem:fandg1111}
For $a_1 = a_4 = 1$, $a_2 = -1$,  $a_3 = 0$, the following relations hold for the function $F$:
$$F(4n+1,4k+1) = F(4n+3,4k+1) = F(4n+3,4k+2)  =  F(4n+3,4k+3) = 0.$$
\end{lemma}
\begin{proof}
$g(4n+1,4k+1) = (4k+1 \wedge \neg(4(n-k))) \vee (4k+1 \wedge \neg 4n+1) \neq 0$, i.e.
$F(4n+1,4k+1) = 0$. 

$g(4n+3,4k+1) = (4k+1 \wedge \neg(4(n-k)+2)) \vee (4k+1 \wedge \neg 4n+3) \neq 0$, i.e. $F(4n+3,4k+1)= 0$.

Note that $(4k+2 \wedge \neg 4n+3) = (4k \wedge \neg 4n) = 2(2k \wedge \neg 2n)$ and $(2k+1 \wedge \neg 2n+1) = (2k \wedge \neg 2n)$.
Similarly $(4k+3 \wedge \neg 4n+3) = (4k \wedge \neg 4n) = 2(2k+1 \wedge \neg 2n+1)$.

$g(4n+3,4k+2) = (4k+2 \wedge \neg(4(n-k)+1)) \vee (4k+2 \wedge \neg 4n+3) \neq 0$, i.e. $F(4n+3,4k+2)= 0$.
$g(4n+3,4k+3) = (4k+3 \wedge \neg(4(n-k))) \vee (4k+3 \wedge \neg 4n+3) \neq 0$, i.e. $F(4n+3,4k+3) = 0$. 
\end{proof}

\begin{theorem} \label{thm:odd}
For $n, k\geq 0$, 
$\left(\begin{array}{c}n-k\\k\end{array}\right) \left(\begin{array}{c}n\\k\end{array}\right)$ is odd if and only if $k = 0$, i.e.
$$\sum_{k=0}^n \left[\left(\begin{array}{c}n-k\\k\end{array}\right) \left(\begin{array}{c}n\\k\end{array}\right) \mod 2 \right] = 1$$ for all $n$.
\end{theorem}

\begin{proof}
Define $a(n) = \sum_{k=0}^n \left[\left(\begin{array}{c}n-k\\k\end{array}\right) \left(\begin{array}{c}n\\k\end{array}\right) \mod 2\right]$.
By Lemma \ref{lem:a2n} and Lemma \ref{lem:fandg1111}, $a(0) = 1$, $a(n) = a(2n)$ and 
$a(4n+1) = a(n)$, $a(4n+3) = a(n)$.
By Theorem \ref{thm:rlt}, $a(n)$ is the run length transform of the sequence $1,1,1,1,\dots$, i.e. $a(n) = 1$ for all $n\geq 0$.
The conclusion then follows since $\left(\begin{array}{c}n-k\\k\end{array}\right) \left(\begin{array}{c}n\\k\end{array}\right) = 1$ when $k = 0$.
\end{proof}

Theorem \ref{thm:odd} can also be interpreted via Sierpi\'{n}ski's triangle generated by Pascal's triangle mod 2 and by looking at it as follows: if starting from the left edge of the triangle and moving $k$ steps to the right reaches a point of Sierpi\'{n}ski's triangle, then continuing moving diagonally $k$ steps must necessarily reach a void of Sierpi\'{n}ski's triangle.

\section{Third order recurrences}

\begin{definition} \label{def:mu}
Let $n$ be an odd positive integer not of the form $2^k-1$. The {\em splitting function} $\mu(n) = (a,b,m)$ returns positive integers
$a$, $b$, $m$ such that $a2^m+b = n$ with $2^m > 2b$ and $a$ is the smallest such number satisfying this.
\end{definition}

A way to describe the numbers $a$ and $b$ in Definition \ref{def:mu} is that they are obtained by splitting the binary expansion of $n$ along the first occurrence of $0$.  For instance since $413 = 110011101_2$, which can be split into `11' and `011101' which is $3$ and $29$, and thus $\mu(413) = (3, 29, 7)$. Note that $n>3b$.

Theorem \ref{thm:rlt} shows that if a sequence satisfies a second order recurrence, then we can easily determine the recurrence relations that the run length transform satisfies. This result can be generalized to $n$-th order recurrences as follows:

\begin{theorem} \label{thm:rlt-general}
Let $\{S_n\}_{n\geq 0}$ be defined by the $(k+1)$-th order recurrence $S_{n+1} = \sum_{i=0}^{k} d_iS_{n-i}$ with initial conditions $S_i = c_i$ for $i = 0,1,\dots , {k}$. Then the run length transform of $\{S_n\}$ is given by $\{T_n\}_{n\geq 0}$ satisfying 
\begin{itemize}
\item $w = 2^{k+1}$,
\item
$T_0 = c_0$,
\item $T_{2n} = T_n$,
\item $T_{wn+i} = T_i T_n$, for $i = 1,3,5,\dots , 2^k-1$,
\item $T_{wn+2^k+i} = T_{b_i}T_{\frac{wn}{2^{m_i}}+a_i}$ for $i = 1,3,5,\dots , 2^k-3$ where $\mu(2^k+i) = (a_i, b_i, m_i)$,
\item $T_{wn+w-1} = \sum_{i=0}^{k} d_i T_{2^{k-i}n+2^{k-i}-1}$.
\end{itemize}
\end{theorem}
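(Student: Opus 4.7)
The plan is to verify the recurrences in the theorem one at a time; the initial equation $T_0 = c_0$ is immediate from the definition of the run length transform (applied at $n = 0$, where the empty product over runs is taken to be $S_0$), and only the last identity actually uses the defining recurrence on $\{S_n\}$. Throughout I would work under the convention $c_0 = S_0 = 1$, which is implicitly required both for the identities $T_{wn+i} = T_i T_n$ to hold at $n = 0$ and for the last identity to close up at its $i = k$ term. The central combinatorial tool is a concatenation lemma: if the binary expansion of a nonnegative integer $N$ can be split as ``bits of $v$, then a single $0$, then bits of $u$'' (equivalently $N = v \cdot 2^{\ell + 1} + u$ with $u < 2^\ell$), then every maximal run of $1$'s of $N$ lies entirely above or entirely below the separating $0$-bit, so $T_N = T_u T_v$, where edge cases $u = 0$ or $v = 0$ are absorbed by $T_0 = 1$.

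With this lemma in hand, $T_{2n} = T_n$ is just ``append a low $0$-bit,'' and $T_{wn + i} = T_i T_n$ for odd $i$ with $1 \leq i \leq 2^k - 1$ follows because $i < 2^k$ forces bit $k$ of $wn + i$ to equal $0$, supplying the needed separator. For $T_{wn + 2^k + i} = T_{b_i}\, T_{wn/2^{m_i} + a_i}$ the key is to interpret the splitting function $\mu$: on the odd integer $2^k + i$ -- which is not of the form $2^j - 1$ since $i \leq 2^k - 3$ -- the condition ``$a$ minimal with $2^m > 2b$'' amounts to locating the topmost $0$-bit of $2^k + i$, at position $m_i - 1$; the top chunk $a_i$ is then the leading run of $1$'s and the bottom chunk $b_i$ is what sits below the separator (with $b_i \geq 1$ because $2^k + i$ is odd and $m_i \geq 1$). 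Shifting $n$ up by $k + 1$ places puts this same $0$-bit at position $m_i - 1$ of $wn + 2^k + i$, with $b_i$ below it and $wn/2^{m_i} + a_i = 2^{k+1-m_i} n + a_i$ above, and the concatenation lemma finishes the job; the case $n = 0$ is handled uniformly by $T_0 = 1$.

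The main obstacle is the last recurrence $T_{wn + w - 1} = \sum_{i=0}^k d_i T_{2^{k-i} n + 2^{k-i} - 1}$, which is the only step where I would invoke the defining recurrence for $S$. Let $r \geq 0$ be the length of the trailing run of $1$'s in $n$ (so $r = 0$ iff $n$ is zero or even), and let $P$ be the product of $S$-values over the remaining runs of $n$, giving $T_n = S_r P$ for $r \geq 1$ and $T_n = P$ otherwise. A direct bit count shows $T_{2^j n + 2^j - 1} = S_{j + r} P$ for every $j \geq 1$, regardless of parity of $n$, because the appended block of $j$ ones either fuses with $n$'s trailing run to form a run of length $j + r$ or constitutes a fresh run of length $j$ when $r = 0$. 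Applied to the left side at $j = k + 1$ it gives $T_{wn + w - 1} = S_{k + 1 + r} P$, and applied termwise on the right side at $j = k - i$ it gives $P \sum_{i=0}^k d_i S_{(k - i) + r}$, which equals $S_{k + 1 + r} P$ by the recurrence for $\{S_n\}$ evaluated at index $k + r$. The delicate case is $n$ even and positive, where the $j = 0$ term $d_k T_n = d_k P$ must be matched with $d_k S_0 P$ coming from the recurrence on $S$; this is precisely where the assumption $S_0 = c_0 = 1$ becomes indispensable, and I would highlight it explicitly in the writeup.
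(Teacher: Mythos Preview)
Your argument is correct. The paper itself omits the proof entirely, saying only that it is similar to that of Theorem~\ref{thm:rlt} in \cite{sloane:ca:2015}; your approach---a multiplicative ``concatenation lemma'' for the run length transform, followed by a trailing-run computation that invokes the recurrence on $\{S_n\}$ (with the edge case $S_0=1$ made explicit)---is exactly the natural extension of that second-order argument, so there is nothing substantive to compare.
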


\begin{proof}
The proof is similar to the proof of Theorem \ref{thm:rlt} (see Ref. \cite{sloane:CA:2018} for a proof of Theorem \ref{thm:rlt}) and is omitted.
\end{proof}

Even though the right hand side in some of the recurrence relations above is a product of $2$ terms of $\{T_n\}$, one of the terms is determined solely by the initial conditions $c_i$'s. More specifically, note that for $i = 1,3,5,\dots , 2^k-1$, $T_i$ is a product of $S_j$'s where $j\leq k$ and thus is a product of some $c_j$'s.
Similarly, for $i\leq 2^k-3$, we have $2^{k+1}-3>2^k+i>3b_i$ and therefore $b_i < 2^k$.
Thus $T_{b_i}$ is also the product of some $c_j$'s.
In particular, Theorem \ref{thm:rlt-general} for the case $k=1$ corresponds to Theorem \ref{thm:rlt}.  For $k=2$, we have the following result on third order recurrences:

\begin{corollary}\label{cor:rlt-k2}
Let $\{S_n\}_{n\geq 0}$ be defined by the recurrence $S_{n+1} = d_0S_n + d_1S_{n-1} + d_2S_{n-2}$ with initial conditions $S_0 = c_0$, $S_1 = c_1$, $S_2 = c_2$. Then
the run length transform of $\{S_n\}$ is given by $\{T_n\}_{n\geq 0}$ satisfying 
\begin{itemize}
\item $T_0 = c_0$,
\item $T_{2n} = T_n$, 
\item $T_{8n+1} = c_1T_n$,  
\item $T_{8n+3} = c_2T_n$,
\item $T_{8n+5} = c_1T_{2n+1}$, and
\item $T_{8n+7} = d_0T_{4n+3}+d_1T_{2n+1}+d_2T_n$.
\end{itemize}
\end{corollary}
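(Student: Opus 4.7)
The plan is to derive Corollary \ref{cor:rlt-k2} by a direct specialization of Theorem \ref{thm:rlt-general} to the case $k=2$, translating each of the six bullet items and evaluating the few small constants that appear.

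First I would fix $k=2$, so that $w = 2^{k+1} = 8$, and observe that the items $T_0 = c_0$ and $T_{2n} = T_n$ carry over verbatim with no work. To handle the odd-index formulas I would evaluate $T_1$ and $T_3$ directly from the definition of the run length transform: the binary expansion $1$ is a single run of length $1$, so $T_1 = S_1 = c_1$, and the expansion $11$ is a single run of length $2$, so $T_3 = S_2 = c_2$. The rule $T_{wn+i} = T_i\,T_n$ for odd $i$ with $1 \le i \le 2^k - 1 = 3$ then immediately yields $T_{8n+1} = c_1 T_n$ and $T_{8n+3} = c_2 T_n$.

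Next I would compute the only value of $\mu$ that is needed. For $k=2$, the range $i = 1, 3, \dots, 2^k-3$ reduces to the single value $i = 1$, so I only need $\mu(2^k + 1) = \mu(5)$. Since $5 = 101_2$, splitting along the first $0$ gives $a = 1$, $b = 1$, $m = 2$, and indeed $1 \cdot 2^2 + 1 = 5$ with $2^2 = 4 > 2 = 2b$. The corresponding bullet in Theorem \ref{thm:rlt-general} therefore specializes to
\[
T_{8n+5} \;=\; T_{b_1}\,T_{\tfrac{8n}{2^{m_1}} + a_1} \;=\; T_1\,T_{2n+1} \;=\; c_1 T_{2n+1}.
\]

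Finally, for the top index $T_{wn+w-1} = T_{8n+7}$ I would just expand the sum
\[
T_{8n+7} \;=\; \sum_{i=0}^{2} d_i\, T_{2^{2-i}n + 2^{2-i}-1}
\]
with $(d_0, d_1, d_2) = (c_3, c_4, c_5)$, producing the three terms $c_3 T_{4n+3}$, $c_4 T_{2n+1}$, and $c_5 T_n$, which together give the stated recurrence. The only real obstacle is bookkeeping — keeping the index ranges $i = 1, 3, \dots, 2^k-1$ and $i = 1, 3, \dots, 2^k-3$ straight, and correctly evaluating $\mu(5)$ — but once these are pinned down the corollary follows without any further computation, so the proof can be left as ``specialize Theorem \ref{thm:rlt-general} with $k=2$ and simplify.''
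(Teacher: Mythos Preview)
Your proposal is correct and matches the paper's approach exactly: the paper presents Corollary~\ref{cor:rlt-k2} immediately after Theorem~\ref{thm:rlt-general} as the $k=2$ case, without writing out any proof at all. Your explicit bookkeeping (evaluating $T_1$, $T_3$, computing $\mu(5)=(1,1,2)$, and unpacking the final sum) is more detailed than what the paper provides, but it is precisely the intended specialization.
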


\begin{theorem}
The following relations hold for the function $F$ as defined in Definition \ref{def:F}:
\begin{itemize}
\item $F(8n,8k+i) = 0$ for $i=1,\dots , 7$,
\item $F(8n+1,8k+i) = 0$ for $i =2,\dots ,7$,
\item $F(8n+3,8k+i) = 0$ for $i = 4,5,6,7$,
\item $F(8n+5,8k+i) = 0$ for $i = 2,3,6,7$,
\item If  $a_3 \wedge \neg a_1 \equiv 0 \mod 8$ and $0\leq a_1, a_3 < 8$, then  $F(8n+1,8k) = F(n,k)$,
\item If  $a_3 \wedge \neg a_1 \not\equiv 0 \mod 8$, then  $F(8n+1,8k) = 0$,
\item If $3a_3 \wedge \neg 3a_1 \equiv 0 \mod 8$ and $0\leq 3a_1, 3a_3 < 8$, then  $F(8n+3,8k) = F(n,k)$,
\item If $3a_3 \wedge \neg 3a_1 \not\equiv 0 \mod 8$, then  $F(8n+3,8k) = 0$,
\item If $5a_3 \wedge \neg 5a_1 \equiv 0 \mod 8$ and $0\leq 5a_1, 5a_3 < 8$, then  $F(8n+5,8k) = F(n,k)$,
\item If $5a_3 \wedge \neg 5a_1 \not\equiv 0 \mod 8$, then  $F(8n+5,8k) = 0$.
\end{itemize}
\end{theorem}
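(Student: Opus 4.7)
The approach mirrors the proof of Theorem~\ref{thm:fandg} but shifts all bit-level reasoning from base $4$ (two low bits) to base $8$ (three low bits). Throughout I use $F(n,k) = 1$ iff $g(n,k) = 0$ and carry out the bit arithmetic explicitly for each case, keeping track of the low three bits and the bits at position $3$ or higher separately.

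For the zero-identities $F(8n+j, 8k+i) = 0$ with $j \in \{0, 1, 3, 5\}$ and $i$ in the stated ranges, it suffices (via Theorem~\ref{thm:one} applied to the outer factor $\binom{8n+j}{8k+i}$) to check $(8k+i) \wedge \neg(8n+j) \neq 0$. Since $8k$ and $8n$ have their low three bits zero, this reduces to the three-bit condition $i \wedge \neg j \neq 0$. A direct enumeration shows this fails exactly when the bits of $i$ are contained in the bits of $j$: for $j = 0, 1, 3, 5$ the admissible $i$'s are $\{0\}$, $\{0,1\}$, $\{0,1,2,3\}$, $\{0,1,4,5\}$ respectively, and every $i$ outside these sets yields $F = 0$, matching the four bulleted claims.

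For the substantive identities involving $F(8n+j, 8k)$ with $j \in \{1, 3, 5\}$, expand
\[
g(8n+j, 8k) = \bigl[(8(a_3 n + a_4 k) + j a_3) \wedge \neg (8(a_1 n + a_2 k) + j a_1)\bigr] \vee \bigl[8k \wedge \neg (8n+j)\bigr].
\]
The second $\vee$-operand is immediately $8(k \wedge \neg n)$. For the first operand, the low three bits depend only on $j a_3$ and $j a_1$ and equal $(j a_3 \wedge \neg j a_1) \bmod 8$. Whenever this residue is nonzero, $g(8n+j, 8k) \neq 0$ and hence $F(8n+j, 8k) = 0$, which disposes of the three ``$\not\equiv 0 \bmod 8$'' items. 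When the residue vanishes and the side condition $0 \leq j a_1, j a_3 < 8$ holds (namely $a_1, a_3 < 8$ for $j = 1$; $3 a_1, 3 a_3 < 8$ for $j = 3$; $5 a_1, 5 a_3 < 8$ for $j = 5$), the additions $8(a_3 n + a_4 k) + j a_3$ and $8(a_1 n + a_2 k) + j a_1$ produce no carry into bit $3$, so the bits at position $\geq 3$ of the first $\wedge \neg$ term are exactly $8\bigl[(a_3 n + a_4 k) \wedge \neg (a_1 n + a_2 k)\bigr]$. Combining with the second $\vee$-operand yields $g(8n+j, 8k) = 8\, g(n, k)$, and therefore $F(8n+j, 8k) = F(n, k)$, establishing the three ``$\equiv 0 \bmod 8$'' items.

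The main bookkeeping obstacle is cleanly separating the low three bits of the first $\wedge \neg$ term from its high bits: without the bound $j a_3 < 8$, the summand $j a_3$ in $8(a_3 n + a_4 k) + j a_3$ can trigger a carry into bit $3$ that then cascades unpredictably through the high bits of $a_3 n + a_4 k$, obstructing the clean factorization $g(8n+j, 8k) = 8\, g(n, k)$. This is precisely why each of the three identifications of $F(8n+j, 8k)$ with $F(n,k)$ requires the smallness hypothesis on $j a_1$ and $j a_3$, whereas the ``$\not\equiv 0 \bmod 8$'' conclusions --- which rely only on the low three bits of $g$ being forced nonzero --- need no such restriction.
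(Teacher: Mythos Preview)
Your proposal is correct and follows exactly the approach the paper intends: the paper's own proof reads in its entirety ``The proof is similar to the proof of Theorem~\ref{thm:fandg}'', and what you have written is precisely that argument transported from two low bits to three, with the bit-level bookkeeping spelled out carefully (including the carry obstruction that motivates the smallness hypotheses $0\le ja_1, ja_3 < 8$).
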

\begin{proof}
The proof is similar to the proof of Theorem \ref{thm:fandg}.
\end{proof}

\begin{lemma} \label{lem:a2n_order3}
The sequence $a(n)$ satisfies the following properties:
\begin{itemize}
\item If  $a_3 \wedge \neg a_1 \equiv 0 \mod 8$ and $0\leq a_1, a_3 < 8$,  then $a(8n+1) = a(n) + \sum_{k=0}^{n} F(8n+1,8k+1)$, 
\item If $3a_3 \wedge \neg 3a_1 \equiv 0 \mod 8$ and $0\leq 3a_1, 3a_3 < 8$, then $a(8n+3) = a(n) +  \sum_{m=1}^{3} \sum_{k=0}^{n} F(8n+3,8k+m)$,  
\item  If $5a_3 \wedge \neg 5a_1 \equiv 0 \mod 8$ and $0\leq 5a_1, 5a_3 < 8$, then $a(8n+5) = a(n) +  \sum_{m\in \{1,4,5\}}\sum_{k=0}^{n} F(8n+5,8k+m)$,  
\item If  $a_3 \wedge \neg a_1 \not\equiv 0 \mod 8$, then $a(8n+1) =  \sum_{k=0}^{n} F(8n+1,8k+1)$,
\item If  $3a_3 \wedge \neg 3a_1 \not\equiv 0 \mod 8$, then  $a(8n+3) =  \sum_{m=1}^{3} \sum_{k=0}^{n} F(8n+3,8k+m)$,
\item If  $5a_3 \wedge \neg 5a_1 \not\equiv 0 \mod 8$, then  $a(8n+5) =  \sum_{m\in\{1,4,5\}} \sum_{k=0}^{n} F(8n+5,8k+m)$.

\end{itemize}
\end{lemma}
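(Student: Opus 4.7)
The strategy mirrors the proof of Lemma \ref{lem:a2n}, but partitions the summation index $k$ modulo $8$ instead of modulo $4$, so as to match the eight residue classes that arise from the $3^{\mathrm{rd}}$-order recurrence structure of Corollary \ref{cor:rlt-k2}.

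Concretely, for $j\in\{1,3,5\}$ I would write
$$a(8n+j) \;=\; \sum_{i=0}^{7}\sum_{k=0}^{n} F(8n+j,\, 8k+i),$$
using that $F(\cdot,\cdot)$ vanishes whenever the second argument exceeds the first, so extending each inner sum to $k=n$ is harmless. Then the preceding theorem kills most of the eight inner sums: for $j=1$ only the residues $i\in\{0,1\}$ can survive, for $j=3$ only $i\in\{0,1,2,3\}$ survive, and for $j=5$ only $i\in\{0,1,4,5\}$ survive. This immediately reduces each $a(8n+j)$ to a two- or four-term expression in the surviving residues.

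The final step is a two-way case split on the divisibility hypothesis. When $a_3 \wedge \neg a_1 \equiv 0 \mod 8$ with $0\le a_1,a_3 < 8$ (and the analogous conditions on $3a_3 \wedge \neg 3a_1$ or $5a_3 \wedge \neg 5a_1$ in the $j=3$ and $j=5$ cases), the preceding theorem supplies $F(8n+j,8k) = F(n,k)$, so the $i=0$ block contributes exactly $\sum_{k=0}^{n} F(n,k) = a(n)$, and the surviving residues $i\ne 0$ give the residual sums stated in the lemma. When the divisibility condition fails, the preceding theorem instead gives $F(8n+j,8k) = 0$, so the $i=0$ block vanishes and only the residual sums remain. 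This handles all six bullets in parallel.

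I do not expect a genuine obstacle: the argument is purely bookkeeping over residues modulo $8$, and every nontrivial identity needed — the vanishing of $F(8n+j,8k+i)$ for the dead residues and the value of $F(8n+j,8k)$ in either branch of the divisibility test — has already been supplied by the preceding theorem. The one detail worth double-checking, but purely mechanical, is that the upper limit $k=n$ is correct in each residue class; this is immediate because any $k>n$ forces $8k+i > 8n+j$ and hence $F(8n+j,8k+i)=0$, so extending or truncating the ranges never changes the sum.
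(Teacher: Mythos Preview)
Your proposal is correct and is exactly the approach the paper has in mind: the paper's own proof simply reads ``The proof is similar to the proof of Lemma~\ref{lem:a2n},'' and your write-up is a faithful elaboration of that — partition the summation index modulo $8$, discard the dead residues using the preceding theorem, and treat the $i=0$ block according to the divisibility hypothesis. The range-extension remark you make (that terms with $8k+i > 8n+j$ vanish automatically) is the only bookkeeping subtlety, and you have handled it correctly.
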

\begin{proof}
The proof is similar to the proof of Lemma \ref{lem:a2n}.
\end{proof}

\subsection{Run length transform of Narayana's cows sequence}
Narayana's cows sequence (OEIS \OEIS{A000930}) $\{b_n: n \geq 0\}$ is defined as 
$b_0 = b_1 = b_2 = 1$, $b_{n} = b_{n-1}+b_{n-3}$. The first few terms are:
$1$, $1$, $1$, $2$, $3$, $4$, $6$, $9$, $13$, $19$, $28$, $41$, $60$, $88$, $129$, $189$, $277$, $406$, $595$, $\dots$
The following results show that $$a(n) =  \sum_{k=0}^n \left[\left(\begin{array}{c}n-k\\6k\end{array}\right) \left(\begin{array}{c}n\\k\end{array}\right) \mod 2\right]$$ is the run length transform of Narayana's cows sequence.

\begin{lemma} \label{lem:fandgcows}
For  $a_1 = 1$, $a_2 = -1$, $a_3 = 0$, $a_4 = 6$, the following relations hold for the function $F$:
\begin{itemize}
\item $F(8n+1,8k+1) = 0$,
\item $F(8n+3,8k+i) = 0$ for $1\leq i \leq 3$,
\item $F(8n+5,8k+1)  = F(8n+5,8k+5) = 0$,
\item $F(8n+5,8k+4) = F(2n+1,2k+1)$,
\item $F(8n+7,8k+i) = 0$ for $i \in \{3,5,6,7\}$,
\item $F(8n+7,8k) = F(8n+7,8k+1) = F(n,k)$,
\item $F(8n+7,8k+2) = F(4n+3,4k+1)$,
\item $F(8n+7,8k+4) = F(4n+3,4k+2)$,
\item $F(4n+3,4k+3) = 0$.
\end{itemize}
\end{lemma}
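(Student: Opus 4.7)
The plan is to mirror the style of the earlier case lemmas (Lemma \ref{lem:fandgfib}, Lemma \ref{lem:fandgfibt}, Lemma \ref{lem:fandgpow2}, Lemma \ref{lem:fandg2}): translate each claim about $F$ into a claim about $g(n,k) = (6k \wedge \neg(n-k)) \vee (k \wedge \neg n)$ via Theorem \ref{thm:two}, and then proceed by direct bitwise inspection at the indicated arguments. For each identity, I would either exhibit a low-order bit of $g$ that is forced to be $1$ (yielding $F = 0$) or algebraically simplify $g$ to a power-of-two multiple of $g$ at the smaller arguments (yielding the claimed equality). The key observation used throughout is that $48 = 110000_2$, so $48k$ contributes no bits below position $4$; consequently the low bits of $6(8k+r) = 48k + 6r$ are exactly those of $6r$, which is the source of every computation.

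First I would dispatch the vanishing cases (items 1, 2, 3, 5, and 9). For example, for $F(8n+1, 8k+1)$ one has $6(8k+1) = 48k+6$ and $(8n+1)-(8k+1) = 8(n-k)$, so bit $2$ of $48k+6$ is $1$ while bit $2$ of $8(n-k)$ is $0$; hence $g(8n+1,8k+1) \neq 0$. The remaining vanishing claims follow the same template: for each $(r,s)$ in the relevant range, I would tabulate the four-bit patterns of $6r$ and $s-r$ and locate a position where $6r$ has a $1$ that $s-r$ does not (forcing the first conjunct to be nonzero), or alternatively observe that $(8k+r)\wedge \neg(8n+s)$ has a forced nonzero low bit. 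Item 9 is a variant at the finer scale and can be reduced by a similar bit check on $6(4k+3) = 24k+18$ against $\neg(4n) = \neg 4(n-k)$ style reasoning.

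For the equality cases (items 4, 6, 7, 8) I would show that the low bits of $g$ vanish and then peel off the appropriate power of $2$. For item 6, the low three bits of $g(8n+7,8k+r)$ with $r\in\{0,1\}$ are $0$, and the remaining bits organize exactly as $8g(n,k)$, using the already familiar collapse $(4k+r \wedge \neg(4n+s)) = (4k \wedge \neg 4n)$ and its analogue modulo $8$. For item 4 the reduction to $g(2n+1,2k+1)$ proceeds in the same fashion as $F(4n+3,4k+2) = F(2n+1,2k+1)$ in Lemma \ref{lem:fandgfib}, but now with an extra factor of $6$ on the first binomial that I would need to carry through. Items 7 and 8 are the finest: here the reduction is to $g(4n+3,4k+1)$ and $g(4n+3,4k+2)$ respectively, where only the lowest two bits of $g$ cancel, yielding a factor of $4$.

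The main obstacle is bookkeeping rather than ideas: nine separate identities each demand their own low-bit calculation, and the multiplier $a_4 = 6$ means the bit pattern of $6r$ genuinely varies with $r \in \{0,\ldots,7\}$, so no single case subsumes another. I expect items 7 and 8 to be the most delicate, since there I need to match $6(8k+r) = 48k+6r$ with $6(4k+1)$ or $6(4k+2)$ after extracting a factor of $4$, and the arithmetic of $6r$ for $r \in \{2,4\}$ has to line up with the carry structure of $(4n+3)-(4k+1)$ and $(4n+3)-(4k+2)$. My strategy there is to write both sides of each claimed equality fully expanded, factor out the common $4$ from each term of the disjunction using the substitutions $(4k+r \wedge \neg(4n+s)) = (4k \wedge \neg 4n)$ already invoked in the earlier lemmas, and verify that the remaining expression is precisely the reduced $g$.
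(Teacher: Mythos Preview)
Your proposal is correct and follows exactly the paper's own approach: for each of the nine items it computes $g(8n+r,8k+s)=(48k+6s)\wedge\neg(8(n-k)+(r-s))\ \vee\ (8k+s)\wedge\neg(8n+r)$, either exhibits a forced nonzero low bit to get $F=0$, or strips the agreeing low bits and factors out the appropriate power of $2$ to land on $g$ at the reduced arguments. One small correction to your bookkeeping: in items 7 and 8 the reduction $(8n+7,8k+2)\mapsto(4n+3,4k+1)$ and $(8n+7,8k+4)\mapsto(4n+3,4k+2)$ halves the arguments, so only bit $0$ is peeled off and the factor is $2$, not $4$ (the paper writes $g(8n+7,8k+2)=2\,g(4n+3,4k+1)$, etc.).
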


\begin{proof}
We can write out the following expressions for $g$: 
\begin{itemize}
\item $g(8n+1,8k+1) = (48k+6 \wedge \neg(8(n-k))) \vee (8k+1 \wedge \neg 8n+1) \neq 0$, i.e
$F(8n+1,8k+1) = 0$. 
\item $g(8n+3,8k+1) = (48k+6 \wedge \neg(8(n-k)+2)) \vee (8k+1 \wedge \neg 8n+3) \neq 0$, i.e. $F(8n+3,8k+1) = 0$.
\item $g(8n+3,8k+2) = (48k+12 \wedge \neg(8(n-k)+1)) \vee (8k+2 \wedge \neg 8n+3) \neq 0$, i.e. $F(8n+3,8k+2) = 0$.
\item $g(8n+3,8k+3) = (48k+18 \wedge \neg(8(n-k))) \vee (8k+3 \wedge \neg 8n+3) \neq 0$, i.e. $F(8n+3,8k+3) = 0$.
\item $g(8n+5,8k+1) = (48k+6 \wedge \neg(8(n-k)+4)) \vee (8k+1 \wedge \neg 8n+5) \neq 0$, i.e. $F(8n+5,8k+1) = 0$.
\item $g(8n+5,8k+4) = (48k+24 \wedge \neg(8(n-k)+1)) \vee (8k+4 \wedge \neg 8n+5) =  (48k+24 \wedge \neg(8(n-k))) \vee (8k+4 \wedge \neg 8n+4) = 
4 [(12k+6 \wedge \neg(2(n-k))) \vee (2k+1 \wedge \neg 2n+1)]$, i.e. $F(8n+5,8k+4)  = F(2n+1,2k+1)$.
\item $g(8n+5,8k+5) =  (48k+30 \wedge \neg(8(n-k))) \vee (8k+5\wedge \neg 8n+5) \neq 0$, i.e. $F(8n+5,8k+5) = 0$.
\item $g(8n+7,8k) = (48k \wedge \neg(8(n-k)+7)) \vee (8k \wedge \neg 8n+7) = (48k \wedge \neg(8(n-k)) \vee (8k \wedge \neg 8n)$, i.e. $F(8n+7,8k) = F(n,k)$.
\item $g(8n+7,8k+1) = (48k+6 \wedge \neg(8(n-k)+6)) \vee (8k+1 \wedge \neg 8n+7) = (48k \wedge \neg(8(n-k))) \vee (8k \wedge \neg 8n)$, i.e. $F(8n+7,8k+1) = F(n,k)$.
\item $g(8n+7,8k+2) = (48k+12 \wedge \neg(8(n-k)+5)) \vee (8k+2 \wedge \neg 8n+7) = (48k+12 \wedge \neg(8(n-k)+4)) \vee (8k+2 \wedge \neg 8n+6)
= 2[(24k+6 \wedge \neg(4(n-k)+2))\vee (4k+1 \wedge \neg 4n+3)]$, i.e. $F(8n+7,8k+2) = F(4n+3,4k+1)$.
\item $g(8n+7,8k+3) = (48k+18 \wedge \neg(8(n-k)+4)) \vee (8k+3 \wedge \neg 8n+7) \neq 0$, i.e. $F(8n+7,8k+3) = 0$.
\item $g(8n+7,8k+4) = (48k+24 \wedge \neg(8(n-k)+3)) \vee (8k+4 \wedge \neg 8n+7) = (48k+24 \wedge \neg(8(n-k)+2)) \vee (8k+4 \wedge \neg 8n+6)
= 2[(24k+12 \wedge \neg(4(n-k)+1))\vee (4k+2 \wedge \neg 4n+3)]$, i.e. $F(8n+7,8k+2) = F(4n+3,4k+2)$.
\item $g(8n+7,8k+5) = (48k+30 \wedge \neg(8(n-k)+2)) \vee (8k+5 \wedge \neg 8n+7) \neq 0$, i.e. $F(8n+7,8k+5) = 0$.
\item $g(8n+7,8k+6) = (48k+36 \wedge \neg(8(n-k)+1)) \vee (8k+3 \wedge \neg 8n+7) \neq 0$, i.e. $F(8n+7,8k+6) = 0$.
\item $g(8n+7,8k+7) = (48k+42 \wedge \neg(8(n-k))) \vee (8k+3 \wedge \neg 8n+7) \neq 0$, i.e. $F(8n+7,8k+7) = 0$.
\item $g(4n+3,4k+3) = (24k+18 \wedge \neg(4(n-k))) \vee (4k+3 \wedge \neg 4n+3) \neq 0$, i.e. $F(4n+3,4n+3) = 0$. 
\end{itemize}
\end{proof}

\begin{theorem} \label{thm:recursivecows}
Let $a(n) =  \sum_{k=0}^n \left[\left(\begin{array}{c}n-k\\6k\end{array}\right) \left(\begin{array}{c}n\\k\end{array}\right) \mod 2\right]$. Then
$a(n)$ satisfies the equations $a(0) =1$, $a(2n) = a(n)$, $a(8n+1) = a(8n+3) = a(n)$, $a(8n+5) = a(2n+1)$, and $a(8n+7) = a(n) + a(4n+3)$.
In particular, the sequence  
$$a(n)= \{1, 1, 1, 1, 1, 1, 1, 2, 1, 1, 1, 1, 1, 1, 2, 3, 1, 1, 1, 1, 1, 1, 1, 2, 1, 1, 1, 1, 2, 2,\dots\}$$
is the run length transform of Narayana's cows sequence

 $1, 1, 1, 2, 3, 4, 6, 9, 13, 19, 28, 41, 60, 88, 129, 189, 277, 406, 595,\dots$.
\end{theorem}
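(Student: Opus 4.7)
The plan is to show that $a(n)$ satisfies the five recurrences stated, then invoke Corollary \ref{cor:rlt-k2}. Narayana's cows sequence satisfies $S_{n+1} = S_n + S_{n-2}$, which in the notation of Corollary \ref{cor:rlt-k2} corresponds to $c_0 = c_1 = c_2 = 1$, $c_3 = 1$, $c_4 = 0$, $c_5 = 1$. The resulting run length transform recurrences given by that corollary are $T_0 = 1$, $T_{2n} = T_n$, $T_{8n+1} = T_n$, $T_{8n+3} = T_n$, $T_{8n+5} = T_{2n+1}$, and $T_{8n+7} = T_{4n+3} + T_n$, which match the claimed recurrences for $a(n)$ exactly. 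So once the recurrences on $a(n)$ are proven, the run length transform identification is immediate.

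The parameters in Definition \ref{def:F} here are $(a_1,a_2,a_3,a_4) = (1,-1,0,6)$, so $a_1 = 1$ and $a_3 = 0$; consequently $a_3 \wedge \neg a_1 = 3a_3 \wedge \neg 3a_1 = 5a_3 \wedge \neg 5a_1 = 0$, placing us in the first three branches of Lemma \ref{lem:a2n_order3}. The base cases $a(0) = 1$ and $a(2n) = a(n)$ follow immediately from Lemma \ref{lem:a2n}. For $a(8n+1)$ and $a(8n+3)$, Lemma \ref{lem:a2n_order3} expresses each as $a(n)$ plus certain sums of $F$-values, all of which vanish by Lemma \ref{lem:fandgcows}; hence $a(8n+1) = a(8n+3) = a(n)$. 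For $a(8n+5)$, the only surviving summand is $F(8n+5, 8k+4) = F(2n+1, 2k+1)$, and combining this with the identity $\sum_k F(2n+1, 2k+1) = a(2n+1) - a(n)$ (valid since $a_1 = 1$, as noted after Lemma \ref{lem:a2n}) gives $a(8n+5) = a(n) + (a(2n+1) - a(n)) = a(2n+1)$.

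The main obstacle is the $a(8n+7)$ case, which is not directly covered by Lemma \ref{lem:a2n_order3}. Here I would decompose the sum modulo $8$:
\[ a(8n+7) = \sum_{i=0}^{7} \sum_{k} F(8n+7, 8k+i). \]
By Lemma \ref{lem:fandgcows}, the residues $i \in \{3, 5, 6, 7\}$ contribute nothing, while $i = 0$ and $i = 1$ each contribute $\sum_k F(n,k) = a(n)$, residue $i = 2$ contributes $\sum_k F(4n+3, 4k+1)$, and residue $i = 4$ contributes $\sum_k F(4n+3, 4k+2)$. Thus
\[ a(8n+7) = 2a(n) + \sum_{k} \bigl( F(4n+3, 4k+1) + F(4n+3, 4k+2) \bigr). \]
To eliminate the residual sum, apply Lemma \ref{lem:a2n} to expand $a(4n+3) = a(n) + \sum_{m=1}^{3} \sum_{k} F(4n+3, 4k+m)$; the $m=3$ term vanishes by the final clause of Lemma \ref{lem:fandgcows}, so the residual sum equals $a(4n+3) - a(n)$. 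Substituting yields $a(8n+7) = 2a(n) + a(4n+3) - a(n) = a(n) + a(4n+3)$, and invoking Corollary \ref{cor:rlt-k2} completes the argument.
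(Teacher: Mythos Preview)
Your proof is correct and follows essentially the same route as the paper: you invoke Lemma~\ref{lem:a2n} and Lemma~\ref{lem:a2n_order3} together with the $F$-identities of Lemma~\ref{lem:fandgcows}, and for the $8n+7$ case you reduce $\sum_k\bigl(F(4n+3,4k+1)+F(4n+3,4k+2)\bigr)$ to $a(4n+3)-a(n)$ via Lemma~\ref{lem:a2n} and the vanishing of $F(4n+3,4k+3)$, exactly as the paper does. Your write-up is in fact more explicit than the paper's own proof, which just records the key intermediate identities $a(8n+5)=a(n)+\sum_k F(2n+1,2k+1)$ and $a(8n+7)=2a(n)+\sum_k\bigl(F(4n+3,4k+1)+F(4n+3,4k+2)\bigr)$ without spelling out the residue decomposition.
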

\begin{proof}
This is a consequence of Lemma \ref{lem:a2n}, Corollary \ref{cor:rlt-k2}, Lemma \ref{lem:a2n_order3}, and Lemma \ref{lem:fandgcows}.
Note that by Lemma \ref{lem:fandgcows}, $a(8n+5) = a(n) + \sum_{k=0}^nF(2n+1,2k+1)$ which is equal to $a(2n+1)$ by Lemma \ref{lem:a2n}. Similarly $a(8n+7) = 2a(n) +  \sum_{k=0}^n F(4n+3,4k+1) + F(4n+3,4k+2)$ which is equal to $a(n) + a(4n+3)$.
\end{proof}

\subsection{Run length transform of $1,1, 2,2, 3,3,4,4,5,5,\dots$}
The following result shows that $a(n) =  \sum_{k=0}^n \left[ \left(\begin{array}{c}n+3k\\6k\end{array}\right) \left(\begin{array}{c}n\\k\end{array}\right) \mod 2 \right]$ is equal to the run length transform of the sequence $1,1,2,2,3,3,4,4,5,5,\dots$ (OEIS  \OEIS{A008619}).

\begin{lemma} \label{lem:fandgdouble}
For  $a_1 = 1$, $a_2 = 3$, $a_3 = 0$, $a_4 = 6$, the following relations hold for the function $F$:
\begin{itemize}
\item $F(8n+1,8k+1) = 0$,
\item $F(8n+3,8k+1) = F(n,k)$,
\item $F(8n+3,8k+2) = F(8n+3,8k+3) = 0$,
\item $F(8n+5,8k+1)  = F(8n+5,8k+5) = 0$,
\item $F(8n+5,8k+4) = F(2n+1,2k+1)$,
\item $F(8n+7,8k+i) = 0$ for $i \in \{1,3,6,7\}$,
\item $F(8n+7, 8k) = F(n,k)$,
\item $F(8n+7,8k+2) = F(4n+3,4k+1)$,
\item $F(8n+7,8k+4) =  F(4n+3,4k+2)$,
\item $F(8n+7,8k+5) = F(2n+1,2k+1)$,
\item $F(4n+3,4k+3) = 0$.

\end{itemize}
\end{lemma}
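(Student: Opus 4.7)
The proof is entirely parallel to Lemma~\ref{lem:fandgcows}; the change from $a_2 = -1$ to $a_2 = 3$ only alters the low-bit bookkeeping. Specializing the definition gives
\[ g(n,k) = (6k \wedge \neg(n+3k)) \vee (k \wedge \neg n), \]
and for each identity in the statement I would substitute $(n,k) \to (8n+r_1, 8k+r_2)$ (or $(4n+r_1, 4k+r_2)$ in the final bullet). The arithmetic splits $6(8k+r_2) = 8\cdot 6k + 6r_2$ and $(8n+r_1) + 3(8k+r_2) = 8(n+3k) + (r_1+3r_2)$ cleanly separate the ``high'' part (depending on $n$ and $k$) from the constant ``low'' part.

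For the identities asserting $F = 0$, the plan is to exhibit a single low-order bit at which $6r_2$ has a $1$ while the complement of $r_1+3r_2$ also has a $1$ at that bit. This makes $g(8n+r_1, 8k+r_2) \neq 0$ and hence $F = 0$. Since $r_1, r_2 \in \{0,\dots,7\}$, these checks reduce to a handful of bit-level verifications.

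For the identities asserting equality with a smaller $F$-value, each reduction comes with a natural scale factor $2^s$: namely $s = 3$ for reductions to $(n,k)$, $s = 2$ for reductions to $(2n+1,2k+1)$, and $s = 1$ for reductions to $(4n+3, 4k+1)$ or $(4n+3, 4k+2)$. In each case I would check that in the lowest $s$ bits both clauses of $g$ vanish, then factor out $2^s$ and identify the remaining high bits as $g$ evaluated at the smaller argument. The final identity $F(4n+3, 4k+3) = 0$ is handled by the analogous check at the $4$-scale: bit~$1$ of $(24k+18) \wedge \neg(4n+12k+12)$ is forced to be~$1$.

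The main obstacle is the non-homogeneous low-bit behavior of $n+3k+c$: because $3k = 2k+k$ mixes bits $0$ and $1$, the low bits of this sum depend on the parity of $k$, and one must include enough low bits in the case analysis to ensure that the remaining high bits line up exactly with the target reduction. Once the correct value of $s$ is identified, each reduction follows from the same two-part factoring used in Lemma~\ref{lem:fandgcows}.
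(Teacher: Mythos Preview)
Your proposal is correct and follows essentially the same approach as the paper's proof: both proceed case by case, substituting $(n,k)\mapsto(8n+r_1,8k+r_2)$ into $g(n,k)=(6k\wedge\neg(n+3k))\vee(k\wedge\neg n)$, then either exhibiting a forced nonzero low bit (giving $F=0$) or clearing the low $s$ bits and factoring out $2^s$ to obtain the smaller $g$-value. The paper simply writes out each of these bit computations explicitly rather than describing the template, but the logic is identical; your remark about carries in $r_1+3r_2$ and $6r_2$ overflowing past three bits is the only point that needs care, and the paper handles it just as you suggest, by tracking enough low bits before factoring.
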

\begin{proof}
$g(8n+1,8k+1) = (48k+6 \wedge \neg(8(n+3k)+4)) \vee (8k+1 \wedge \neg 8n+1) \neq 0$, i.e
$F(8n+1,8k+1) = 0$. 

\noindent $g(8n+3,8k+1) = (48k+6 \wedge \neg(8(n+3k)+6)) \vee (8k+1 \wedge \neg 8n+3) =  (48k \wedge \neg(8(n+3k))) \vee (8k \wedge \neg 8n) = 8g(n,k)$, i.e. $F(8n+3,8k+1) = F(n,k)$.

\noindent$g(8n+3,8k+2) = (48k+12 \wedge \neg(8(n+3k)+9)) \vee (8k+2 \wedge \neg 8n+3) \neq 0$, i.e. $F(8n+3,8k+2) = 0$.

\noindent$g(8n+3,8k+3) = (48k+18 \wedge \neg(8(n+3k)+12)) \vee (8k+3 \wedge \neg 8n+3) \neq 0$, i.e. $F(8n+3,8k+3) = 0$.

\noindent$g(8n+5,8k+1) = (48k+6 \wedge \neg(8(n+3k)+8)) \vee (8k+1 \wedge \neg 8n+5) \neq 0$, i.e. $F(8n+5,8k+1) = 0$.

\noindent$g(8n+5,8k+4) = (48k+24 \wedge \neg(8(n+3k)+17)) \vee (8k+4 \wedge \neg 8n+5) =  (48k+24 \wedge \neg(8(n+3k)+16)) \vee (8k+4 \wedge \neg 8n+4) = 
4 [(12k+6 \wedge \neg(2n+6k+4)) \vee (2k+1 \wedge \neg 2n+1)] = 4g(2n+1,2k+1)$, i.e. $F(8n+5,8k+4)  = F(2n+1,2k+1)$.

\noindent$g(8n+5,8k+5) =  (48k+30 \wedge \neg(8(n+k)+20)) \vee (8k+5\wedge \neg 8n+5) \neq 0$, i.e. $F(8n+5,8k+5) = 0$.

\noindent$g(8n+7,8k) =  (48k \wedge \neg(8(n+3k)+7)) \vee (8k\wedge \neg 8n+7)  =  (48k \wedge \neg(8(n+3k)) \vee (8k\wedge \neg 8n) = 8g(n,k)$, i.e. $F(8n+7,8k) = F(n,k)$.

\noindent$g(8n+7,8k+1) =  (48k+6 \wedge \neg(8(n+3k)+10)) \vee (8k+1\wedge \neg 8n+7)  \neq 0$, i.e. $F(8n+7,8k+1) = 0$.

\noindent$g(8n+7,8k+2) =  (48k+12 \wedge \neg(8(n+3k)+13)) \vee (8k+2\wedge \neg 8n+7)  =  (48k+12 \wedge \neg(8(n+3k)+12)) \vee (8k+2\wedge \neg 8n+6) = 
 2[(24k+6) \wedge \neg(4(n+3k)+6) \vee (4k+1\wedge \neg 4n+3)] = 2g(4n+3,4k+1)$, i.e. $F(8n+7,8k+2) = F(4n+3,4k+1)$.

\noindent$g(8n+7,8k+3) =  (48k+18 \wedge \neg(8(n+3k)+16)) \vee (8k+3\wedge \neg 8n+7)  \neq 0$, i.e. $F(8n+7,8k+3) = 0$.

\noindent$g(8n+7,8k+4) =  (48k+24 \wedge \neg(8(n+3k)+19)) \vee (8k+4\wedge \neg 8n+7)  =  (48k+24 \wedge \neg(8(n+3k)+18)) \vee (8k+4\wedge \neg 8n+6) = 
2[ (24k+12) \wedge \neg(4(n+3k)+9) \vee (4k+2\wedge \neg 4n+3)] = 2g(4n+3,4k+2) $, i.e. $F(8n+7,8k+4) = F(4n+3,4k+2)$.

\noindent$g(8n+7,8k+5) =  (48k+30 \wedge \neg(8(n+3k)+22)) \vee (8k+5\wedge \neg 8n+7)  =  (48k+24 \wedge \neg(8(n+3k)+16)) \vee (8k+4\wedge \neg 8n+4) = 
 4[ (12k+6) \wedge \neg(2(n+3k)+4) \vee (2k+1\wedge \neg 2n+1)] = 4g(2n+1,2k+1)$, i.e. $F(8n+7,8k+5) = F(2n+1,2k+1)$.

\noindent$g(8n+7,8k+6) =  (48k+36 \wedge \neg(8(n+3k)+25)) \vee (8k+6\wedge \neg 8n+7) \neq 0$, i.e. $F(8n+7,8k+6) = 0$.

\noindent$g(8n+7,8k+7) =  (48k+42 \wedge \neg(8(n+3k)+28)) \vee (8k+7\wedge \neg 8n+7)  \neq 0$, i.e. $F(8n+7,8k+7) = 0$.

\noindent$g(4n+3,4k+3) =  (24k+18 \wedge \neg(4(n+3k)+12)) \vee (4k+3\wedge \neg 4n+3)    \neq 0$, i.e. $F(4n+3,4k+3) = 0$.

\end{proof}

\begin{theorem} \label{thm:recursivedouble}
Let $a(n) =  \sum_{k=0}^n \left[\left(\begin{array}{c}n+3k\\6k\end{array}\right) \left(\begin{array}{c}n\\k\end{array}\right) \mod 2\right]$. Then
$a(n)$ satisfies the equations $a(0) =1$, $a(2n) = a(n)$, $a(8n+1) = a(n)$,  $a(8n+3) = 2a(n)$, $a(8n+5) = a(2n+1)$, and $a(8n+7) = a(4n+3) + a(2n+1)-a(n)$.
In particular,  the sequence $$a(n)= \{1, 1, 1, 2, 1, 1, 2, 2, 1, 1, 1, 2, 2, 2, 2, 3, 1, 1, 1, 2, 1, 1, 2, 2, 2, 2, 2, 4, 2, 2,\dots\}$$ is the run length transform of the sequence
 $1,1,2,2,3,3,4,4,5,5,\dots$.
\end{theorem}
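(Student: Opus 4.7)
The plan is to follow the same strategy used to establish Theorem \ref{thm:recursivecows}: combine the functional identities of Lemma \ref{lem:fandgdouble} with the summation rules of Lemma \ref{lem:a2n_order3} to reduce each $a(8n+i)$ for odd $i$ to a linear combination of values of $a$ on smaller arguments, and then recognize the resulting system as the one produced by Corollary \ref{cor:rlt-k2} for the appropriate initial data and coefficients.

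First I would invoke Lemma \ref{lem:a2n} to obtain $a(0)=1$ and $a(2n)=a(n)$ immediately, since these hold independently of the specific choice of $a_i$. For $a(8n+1)$, Lemma \ref{lem:fandgdouble} gives $F(8n+1,8k+1)=0$, so Lemma \ref{lem:a2n_order3} collapses the sum to $a(n)$. For $a(8n+3)$, the lemma gives $F(8n+3,8k+1)=F(n,k)$ and $F(8n+3,8k+2)=F(8n+3,8k+3)=0$, so $a(8n+3) = a(n) + \sum_k F(n,k) = 2a(n)$. For $a(8n+5)$, only $F(8n+5,8k+4)=F(2n+1,2k+1)$ contributes among $m\in\{1,4,5\}$, and the identity $\sum_k F(2n+1,2k+1)=a(2n+1)-a(n)$ noted immediately after Lemma \ref{lem:a2n} yields $a(8n+5)=a(2n+1)$.

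The only delicate case is $a(8n+7)$, which is not directly tabulated in Lemma \ref{lem:a2n_order3} and so requires using the full list of identities for $F(8n+7,8k+m)$ with $m=0,\ldots,7$ from Lemma \ref{lem:fandgdouble}. Writing $a(8n+7) = \sum_{m=0}^{7} \sum_{k=0}^{n} F(8n+7,8k+m)$, the vanishing entries at $m\in\{1,3,6,7\}$ leave only the four surviving contributions indexed by $m\in\{0,2,4,5\}$. Substituting $F(8n+7,8k)=F(n,k)$, $F(8n+7,8k+2)=F(4n+3,4k+1)$, $F(8n+7,8k+4)=F(4n+3,4k+2)$, and $F(8n+7,8k+5)=F(2n+1,2k+1)$, and then applying the auxiliary vanishing $F(4n+3,4k+3)=0$ so that $\sum_k[F(4n+3,4k+1)+F(4n+3,4k+2)] = a(4n+3)-a(n)$, I obtain $a(8n+7) = a(n) + (a(4n+3)-a(n)) + (a(2n+1)-a(n)) = a(4n+3)+a(2n+1)-a(n)$. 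The bookkeeping here is the main obstacle, since the correct reductions must be extracted from the lengthy case analysis and reassembled so that the cancellation produces precisely a three-term combination.

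Finally, to identify the generating sequence, I would observe that $1,1,2,2,3,3,\ldots$ (OEIS A008619) satisfies $S_{n+1}=S_n+S_{n-1}-S_{n-2}$ with $S_0=S_1=1$ and $S_2=2$, so that Corollary \ref{cor:rlt-k2} applies with $c_0=c_1=1$, $c_2=2$, $c_3=c_4=1$, $c_5=-1$. The six recurrences produced by the corollary then coincide exactly with the six relations derived above, and this identifies $a(n)$ as the claimed run length transform.
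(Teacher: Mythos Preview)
Your proposal is correct and follows essentially the same approach as the paper: you invoke Lemma~\ref{lem:a2n} and Lemma~\ref{lem:a2n_order3} together with the case analysis of Lemma~\ref{lem:fandgdouble} to derive the recurrences, and then match them against Corollary~\ref{cor:rlt-k2} with $c_0=c_1=1$, $c_2=2$, $c_3=c_4=1$, $c_5=-1$. Your write-up in fact supplies the details that the paper's proof leaves implicit, including the clean handling of the $a(8n+7)$ case via the auxiliary identity $\sum_k[F(4n+3,4k+1)+F(4n+3,4k+2)] = a(4n+3)-a(n)$ coming from $F(4n+3,4k+3)=0$.
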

\begin{proof}
This is a consequence of Lemma \ref{lem:fandgcows},  Corollary \ref{cor:rlt-k2}, Lemma \ref{lem:a2n_order3}, Lemma \ref{lem:fandgdouble}. By  Corollary \ref{cor:rlt-k2}, the recurrence relations implies that $a(n)$ is the run length transform of a sequence $S_n$ that satisfies $S_0 = S_1 = 1$, $S_2 = 2$ and $S_{n+1} = S_n + S_{n-1} - S_{n-2}$.
\end{proof}

\section{Fourth order recurrences}
We first start with the following Corollary to Theorem \ref{thm:rlt} for $k=3$:

\begin{corollary}\label{cor:rlt-k3}
Let $\{S_n\}_{n\geq 0}$ be defined by the recurrence $S_{n+1} = d_0S_n + d_1S_{n-1} + d_2S_{n-2} + d_3S_{n-3}$ with initial conditions $S_0 = c_0$, $S_1 = c_1$, $S_2 = c_2$, and $S_3 = c_3$. Then
the run length transform of $\{S_n\}$ is given by $\{T_n\}_{n\geq 0}$ satisfying 
\begin{itemize}
\item $T_0 = c_0$,
\item $T_{2n} = T_n$, 
\item $T_{16n+1} = c_1T_n$,  
\item $T_{16n+3} = c_2T_n$,
\item $T_{16n+5} = c_1^2 T_{n}$, 
\item $T_{16n+7} = c_3T_n$,
\item $T_{16n+9} = c_1 T_{2n+1}$,
\item $T_{16n+11} = c_2 T_{2n+1}$,
\item $T_{16n+13} = c_1 T_{4n+3}$,
\item $T_{16n+15} = d_0T_{8n+7} + d_1T_{4n+3}+d_2T_{2n+1}+d_3T_n$.
\end{itemize}
\end{corollary}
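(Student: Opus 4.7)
The plan is to deduce this corollary as the specialization of Theorem \ref{thm:rlt-general} to $k=3$, so that $w = 2^{k+1} = 16$. The work then reduces to evaluating, for each residue class modulo $16$, the appropriate bullet from Theorem \ref{thm:rlt-general} and rewriting the resulting symbols in terms of the initial values $c_0,\ldots,c_3$ and the recurrence coefficients $c_4,\ldots,c_7$ (which play the role of the $d_i$).

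First I would treat the ``multiplicative'' cases $T_{16n+i} = T_i T_n$ for odd $i$ with $1 \le i \le 2^k-1 = 7$, namely $i \in \{1,3,5,7\}$. By applying the run length transform definition directly to $i$: the integers $1$, $3$, $7$ have binary expansions with a single run of $1$, $2$, $3$ ones, so $T_1 = S_1 = c_1$, $T_3 = S_2 = c_2$, $T_7 = S_3 = c_3$; and $5 = 101_2$ has two disjoint runs of length one, yielding $T_5 = S_1 S_1 = c_1^2$. Substituting gives the four lines for $T_{16n+1}, T_{16n+3}, T_{16n+5}, T_{16n+7}$.

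Next I would treat the ``partial split'' cases $T_{16n+8+i} = T_{b_i} T_{wn/2^{m_i}+a_i}$ for odd $i$ with $1 \le i \le 2^k-3 = 5$. This requires computing $\mu(9)$, $\mu(11)$, $\mu(13)$ using Definition \ref{def:mu}, i.e.\ splitting the binary expansion at the first occurrence of $0$ from the most significant side. The expansions $9 = 1001_2$, $11 = 1011_2$, $13 = 1101_2$ give $\mu(9)=(1,1,3)$, $\mu(11)=(1,3,3)$, $\mu(13)=(3,1,2)$. Dividing $16n$ by $2^{m_i}$ yields $2n$ or $4n$, and adding $a_i$ produces the indices $2n+1$, $2n+1$, $4n+3$ respectively, while $T_{b_i}$ contributes $c_1$, $c_2$, $c_1$. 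This produces the lines for $T_{16n+9}, T_{16n+11}, T_{16n+13}$.

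Finally, the line for $T_{16n+15}$ comes from the last bullet of Theorem \ref{thm:rlt-general}: the sum $\sum_{i=0}^{3} d_i T_{2^{3-i}n + 2^{3-i}-1}$ unfolds to $c_4 T_{8n+7} + c_5 T_{4n+3} + c_6 T_{2n+1} + c_7 T_n$ once we identify $d_i = c_{4+i}$. There is no real obstacle here beyond careful bookkeeping; the only point requiring attention is verifying the $\mu$ values and the index arithmetic $2^{k-i}n + 2^{k-i}-1$, after which every line of the corollary is a direct transcription of the corresponding bullet of Theorem \ref{thm:rlt-general}.
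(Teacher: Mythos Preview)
Your proposal is correct and matches the paper's approach: the paper presents this result as an immediate corollary of Theorem~\ref{thm:rlt-general} specialized to $k=3$ and gives no explicit proof, and your argument supplies precisely the bookkeeping (evaluating $T_1,T_3,T_5,T_7$ from the run length transform definition, computing $\mu(9)=(1,1,3)$, $\mu(11)=(1,3,3)$, $\mu(13)=(3,1,2)$, and unfolding the final sum with $d_i=c_{4+i}$) needed to verify each line.
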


\begin{theorem} \label{thm:4thorder}
The following relations hold for the function $F$ as defined in Definition \ref{def:F}:
\begin{itemize}
\item $F(16n,16k+i) = 0$ for $i=1,\dots , 15$,
\item $F(16n+1,16k+i) = 0$ for $i =2,\dots ,15$,
\item $F(16n+3,16k+i) = 0$ for $i = 4,\dots ,15$,
\item $F(16n+5,16k+i) = 0$ for $i = 2,3,6,7,8,\dots ,15$,
\item $F(16n+7,16k+i) = 0$ for $i =8,\dots ,15$,
\item $F(16n+9,16k+i) = 0$ for $i = 2,\dots ,7$ and $i=10,\dots ,15$,
\item $F(16n+11,16k+i) = 0$ for $i = 4,5,6,7,12,13,14,15$,
\item $F(16n+13,16k+i) = 0$ for $i = 2,3,6,7,10,11,14,15$,
\item If  $ia_3 \wedge \neg ia_1 \equiv 0 \mod 16$ and $0\leq ia_1, ia_3 < 16$, then  $F(16n+i,16k) = F(n,k)$ for $i \in \{1,3,5,7,9,11,13\}$,
\item If  $ia_3 \wedge \neg ia_1 \not\equiv 0 \mod 16$, then  $F(16n+i,16k) = 0$, for $i \in \{1,3,5,7,9,11,13\}$.
\end{itemize}
\end{theorem}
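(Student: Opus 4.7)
The plan is to generalize the base-$4$ bit arithmetic from the proof of Theorem \ref{thm:fandg} to base $16$, computing $g(16n+i,16k+j)$ one nibble (four-bit block) at a time. The tool I will use throughout is the decoupling identity $(2^sx+c)\wedge\neg(2^sy+d)=2^s(x\wedge\neg y)+(c\wedge\neg d)$, valid whenever $0\le c,d<2^s$; taking $s=4$ gives $(16k+j)\wedge\neg(16n+i)=16(k\wedge\neg n)+(j\wedge\neg i)$ for $0\le i,j<16$, so the low and high halves of $g$ decouple cleanly.

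For each of the eight ``zero'' bullets, my approach is a direct enumeration. Since the $(k\wedge\neg n)$ summand in $g$ contributes $j\wedge\neg i$ to the lowest nibble, it suffices to verify that for every listed pair $(i,j)$ the bit pattern of $j$ is not a subset of that of $i$. The eight cases correspond exactly to $i$ ranging over $\{0,1,3,5,7,9,11,13\}$ and $j$ ranging over the non-subsets of $i$ in $\{1,\dots,15\}$; for example, when $i=5=0101_2$ the bit-subsets are $\{0,1,4,5\}$, whose complement in $\{0,\dots,15\}$ is exactly the listed zero set $\{2,3,6,7,8,\dots,15\}$. Once $j\wedge\neg i\neq 0$ is established in any given case, $g(16n+i,16k+j)>0$ and hence $F(16n+i,16k+j)=0$.

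For the two conditional bullets at $j=0$, I would compute
\[
g(16n+i,16k)=\bigl((16(a_3n+a_4k)+ia_3)\wedge\neg(16(a_1n+a_2k)+ia_1)\bigr)\vee 16(k\wedge\neg n).
\]
The hypothesis $0\le ia_1,ia_3<16$ prevents the constants $ia_1,ia_3$ from carrying out of the low nibble, so by the decoupling identity the right-hand side equals $16\,g(n,k)+(ia_3\wedge\neg ia_1)$ with the last summand confined to the lowest four bits. If $ia_3\wedge\neg ia_1\not\equiv 0\pmod{16}$ the low nibble of $g$ is already nonzero, so $g>0$ and hence $F(16n+i,16k)=0$; if instead $ia_3\wedge\neg ia_1\equiv 0\pmod{16}$ the nibble vanishes and $g(16n+i,16k)=16\,g(n,k)$, giving $F(16n+i,16k)=F(n,k)$.

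The main obstacle is not mathematical depth but bookkeeping: ten sub-statements, each reducing to a short bitwise computation, and I must be careful to invoke the carry-free hypothesis $0\le ia_1,ia_3<16$ in the reduction case so that the nibble separation is genuinely valid. The logical content parallels Theorem \ref{thm:fandg} exactly, with the radix merely increased from $4$ to $16$.
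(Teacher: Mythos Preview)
Your proposal is correct and follows exactly the approach the paper intends: the paper's own proof is the single sentence ``The proof is similar to the proof of Theorem~\ref{thm:fandg}'', and your plan is precisely that generalization from radix $4$ to radix $16$. One small point of care: the final bullet (the $\not\equiv 0$ case) is stated in the paper \emph{without} the hypothesis $0\le ia_1,ia_3<16$, so there you should argue directly that the lowest four bits of $g(16n+i,16k)$ equal $ia_3\wedge\neg ia_1\bmod 16$ regardless of any carries into higher nibbles, rather than relying on the full decoupling identity which needs the boundedness.
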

\begin{proof}
The proof is similar to the proof of Theorem \ref{thm:fandg}.
\end{proof}

\subsection{Run length transform of $1,1,2,1,3,4,7,11,18,\dots$}
Consider the sequence $1$,$1$,$2$,$1$,$3$,$4$,$7$,$11$,$18$,$\dots$ which is equal to 
the coefficients in the expansion of  $\frac{1-2x^3}{1-x-x^2}$. This sequence (OEIS \OEIS{A329723}) is also equal to the sequence formed by prepending the Lucas numbers (OEIS \OEIS{A000032}) with the terms $1$, $1$.
The following result shows that $a(n) =  \sum_{k=0}^n \left[\left(\begin{array}{c}n+2k\\2n-k\end{array}\right) \left(\begin{array}{c}n\\k\end{array}\right) \mod 2\right]$ is equal to the run length transform of this sequence.

\begin{lemma} \label{lem:fandglucas}
For  $a_1 = 1$, $a_2 = 2$, $a_3 = 2$, $a_4 = -1$, the following relations hold for the function $F$:
\begin{itemize}
\item $F(16n+1,16k+1) = F(16n+3,16k+1) = F(16n+3,16k+2) = F(16n+5,16n+5)  = F(16n+7,16k+4)= F(n,k)$,
\item $F(16n+3,16k+3) = F(16n+5,16k+1) = F(16n+5,16k+4) =  0$,
\item $F(16n+7,16k+1) = F(16n+7,16k+2) = F(16n+7,16k+3) =  0$,
\item $F(16n+7,16k+5) = F(16n+7,16k+6) = F(16n+7,16k+7) =  0$,
\item $F(16n+9,16k+8) = F(16n+11,16k+3) = F(16n+11,16k+8) =  F(16n+11,16k+11) = 0$,
\item $F(16n+13,16k+4) = F(16n+13,16k+8) = F(16n+13,16k+12) =  F(16n+13,16k+13) = 0$,
\item $F(16n+9,16k+1) = F(16n+11,16k+1) =  F(16n+11,16k+1) = F(16n+13,16k+1) =  F(2n+1,2k)$,
\item $F(16n+9,16k+9) = F(16n+11,16k+9) =   F(16n+11,16k+10) =  F(2n+1,2k+1)$,
\item $F(16n+13,16k+5) = F(16n+13,16k+5) = F(4n+3,4k+1)$,
\item $F(16n+15,16k+i) = 0$ for $i = 1,2,3,4,7,9,10,11,15$,
\item $F(16n+15,16k+i) =  F(4n+3,4k+1)$ for $i=5,6,8,12,13,14$,
\item $F(16n+15,16k) = F(8n+7,8k) = F(2n+1,2k)$,
\item $F(8n+7,8k+i) = 0$ for $i = 1,2,3,7$,
\item $F(8n+7,8k+i) = F(4n+3,4k+1)$ for $i = 4,5,6$,
\item $F(4n+3,4k+1) = F(4n+3,4k+2)$,
\item $F(4n+3,4k) = F(2n+1,2k)$,
\item $F(4n+3,4k+3) = 0$,
\item $F(16n+i,16k) = 0$ for $i=1,3,5,7,9,11,13$.

\end{itemize}
\end{lemma}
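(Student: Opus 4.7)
My plan is to prove Lemma \ref{lem:fandglucas} by the same direct bitwise case analysis carried out in Lemmas \ref{lem:fandgfib} through \ref{lem:fandgdouble}. Specializing Definition \ref{def:F} to $(a_1,a_2,a_3,a_4)=(1,2,2,-1)$ gives $g(n,k) = ((2n-k)\wedge\neg(n+2k)) \vee (k \wedge \neg n)$, and by Theorem \ref{thm:two} each bullet reduces to showing that $g$ at the prescribed arguments is either nonzero (so $F=0$) or equals, up to a power-of-two shift, $g$ at the claimed reduced arguments (so the two $F$ values agree). The whole proof is then a catalog of about thirty bit-level verifications, one per bullet.

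For a typical bullet such as $F(16n+i,16k+j)$, I would substitute to obtain the expression $g(16n+i,16k+j) = ((32n-16k+2i-j)\wedge\neg(16n+32k+i+2j))\vee((16k+j)\wedge\neg(16n+i))$ and read off the four least significant bits of each term from the tags $i$ and $j$, being careful about the borrow in $2i-j$ when $2i<j$. If any of those low bits lies on a $1$ of the minuend and a $0$ of the $\neg$'d operand, then $g \neq 0$ and $F=0$; otherwise the low four bits vanish and the surviving high bits regroup into $2^4$ times $g$ at one of the claimed reduced arguments $(n,k)$, $(2n+1,2k+1)$, $(4n+3,4k+1)$, or $(2n+1,2k)$. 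The $F(8n+7,\cdot)$ and $F(4n+3,\cdot)$ bullets proceed identically at smaller scales and plug in as the reduced arguments for the $F(16n+15,\cdot)$ bullets; in particular the three-step chain $F(16n+15,16k)=F(8n+7,8k)=F(2n+1,2k)$ is obtained by two iterations of the same cancellation. The absorption identities $(4k+r_1 \wedge \neg(4n+r_2)) = (4k \wedge \neg 4n)$ whenever the low two bits of $r_1$ are bitwise dominated by those of $r_2$, together with $(2k+1 \wedge \neg(2n+1)) = (2k \wedge \neg 2n)$ and their 8- and 16-bit analogues, will silence the low-order carry bookkeeping exactly as in the proofs of Lemmas \ref{lem:fandgcows} and \ref{lem:fandgdouble}.

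The final bullet, $F(16n+i,16k)=0$ for odd $i \in \{1,3,5,7,9,11,13\}$, can be read off as a special case of Theorem \ref{thm:4thorder}: here $ia_3 \wedge \neg ia_1 = 2i \wedge \neg i$ is nonzero modulo $16$ for each such $i$, as a one-line check of the seven nibbles confirms. The main obstacle is pure bookkeeping volume, since the asymmetric offsets $a_2=2$ and $a_4=-1$ force every bullet to be verified individually, and a handful of cases nest two reductions deep before the low bits clear. A tidy way to organize the proof is to tabulate the residues of $2i-j$, $i+2j$, $i$, and $j$ modulo $16$ for all relevant $(i,j)$ up front, then dispatch the bullets in one pass with a single line of computation per bullet.
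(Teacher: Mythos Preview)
Your proposal is correct and mirrors the paper's own proof: both proceed by direct bitwise case analysis, computing $g(16n+i,16k+j)$ (or the $8$- and $4$-scale analogues) explicitly, then either exhibiting a nonzero low bit to force $F=0$ or stripping the low bits to reduce to $g$ at one of the smaller argument pairs. The paper also invokes Theorem \ref{thm:4thorder} for the final bullet exactly as you propose, and similarly leaves a large fraction of the thirty-odd cases to ``similar arguments,'' so your level of detail matches.
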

\begin{proof}
First note that $2i \wedge \neg i \not\equiv 0 \mod 16$ for $i = 1,3,5,7,9,11,13$ which implies that $F(16n+i,16k) = 0$ for $i=1,3,5,7,9,11,13$ by Theorem \ref{thm:4thorder}.

$g(16n+1,16k+1) = (16(2n-k)+1 \wedge \neg(16(n+2k)+3)) \vee (16k+1 \wedge \neg 16n+1) =  (16(2n-k) \wedge \neg(16(n+2k))) \vee (16k \wedge \neg 16n)=16g(n,k)$, i.e
$F(8n+1,8k+1) = F(n,k)$. 

\noindent $g(16n+3,16k+1) = (16(2n-k)+5 \wedge \neg(16(n+2k)+5)) \vee (16k+1 \wedge \neg 16n+3) =  (16(2n-k) \wedge \neg(16(n+2k))) \vee (16k \wedge \neg 16n) = 16g(n,k)$, i.e
$F(8n+3,8k+1) = F(n,k)$. 

\noindent $g(16n+9,16k+1) = (16(2n-k)+17 \wedge \neg(16(n+2k)+11)) \vee (16k+1 \wedge \neg 16n+9) =  (16(2n-k)+16 \wedge \neg(16(n+2k)+8)) \vee (16k \wedge \neg 16n+8)= 8g(2n+1,2k)$, i.e
$F(16n+9,16k+1) = F(2n+1,2k)$.

\noindent $g(16n+9,16k+9) = (16(2n-k)+9 \wedge \neg(16(n+2k)+27)) \vee (16k+9 \wedge \neg 16n+9) =  (16(2n-k)+8 \wedge \neg(16(n+2k)+24)) \vee (16k +8 \wedge \neg 16n+8) = 8g(2n+1,2k+1)$, i.e
$F(16n+9,16k+9) = F(2n+1,2k+1)$. 

\noindent $g(16n+11,16k+1) = (16(2n-k)+21 \wedge \neg(16(n+2k)+13)) \vee (16k+1 \wedge \neg 16n+11) =  (16(2n-k)+16 \wedge \neg(16(n+2k)+8)) \vee (16k \wedge \neg 16n+8) = 8g(2n+1,2k)$, i.e
$F(16n+9,16k+1) = F(2n+1,2k)$.

\noindent $g(16n+11,16k+2) = (16(2n-k)+20 \wedge \neg(16(n+2k)+15)) \vee (16k+2 \wedge \neg 16n+11) =  (16(2n-k)+16 \wedge \neg(16(n+2k)+8)) \vee (16k \wedge \neg 16n+8) = 8g(2n+1,2k)$, i.e
$F(16n+9,16k+2) = F(2n+1,2k)$.

\noindent $g(16n+11,16k+9) = (16(2n-k)+13 \wedge \neg(16(n+2k)+29)) \vee (16k+9 \wedge \neg 16n+11) =  (16(2n-k)+8 \wedge \neg(16(n+2k)+24)) \vee (16k +8  \wedge \neg 16n+8) = 8g(2n+1,2k+1)$, i.e
$F(16n+11,16k+9) = F(2n+1,2k+1)$.

\noindent $g(16n+11,16k+10) = (16(2n-k)+12 \wedge \neg(16(n+2k)+31)) \vee (16k+10 \wedge \neg 16n+11) =  (16(2n-k)+8 \wedge \neg(16(n+2k)+24)) \vee (16k +8  \wedge \neg 16n+8) = 8g(2n+1,2k+1)$, i.e
$F(16n+11,16k+10) = F(2n+1,2k+1)$.

\noindent $g(16n+13,16k+1) = (16(2n-k)+25 \wedge \neg(16(n+2k)+15)) \vee (16k+1 \wedge \neg 16n+13) =  (16(2n-k)+16 \wedge \neg(16(n+2k)+8)) \vee (16k \wedge \neg 16n+8) = 8g(2n+1,2k)$, i.e
$F(16n+13,16k+1) = F(2n+1,2k)$.

\noindent $g(16n+13,16k+5) = (16(2n-k)+21 \wedge \neg(16(n+2k)+23)) \vee (16k+5 \wedge \neg 16n+13) =  (16(2n-k)+20 \wedge \neg(16(n+2k)+20)) \vee (16k+4  \wedge \neg 16n+12) = 4g(4n+3,4k+1)$, i.e
$F(16n+13,16k+5) = F(4n+3,4k+1)$.

\noindent $g(16n+13,16k+9) = (16(2n-k)+17 \wedge \neg(16(n+2k)+31)) \vee (16k+9 \wedge \neg 16n+13) =  (16(2n-k)+20 \wedge \neg(16(n+2k)+20)) \vee (16k+4  \wedge \neg 16n+12) = 4g(4n+3,4k+1)$, i.e
$F(16n+13,16k+9) = F(4n+3,4k+1)$.

\noindent $g(16n+15,16k+5) = (16(2n-k)+25 \wedge \neg(16(n+2k)+25)) \vee (16k+5 \wedge \neg 16n+15) =  (16(2n-k)+20 \wedge \neg(16(n+2k)+20)) \vee (16k+4  \wedge \neg 16n+12) = 4g(4n+3,4k+1)$, i.e
$F(16n+15,16k+5) = F(4n+3,4k+1)$. Similarly, $F(16n+15,16k+6) = F(16n+15,16k+8) = F(4n+3,4k+1)$.

\noindent $g(16n+15,16k) = (16(2n-k)+30 \wedge \neg(16(n+2k)+15)) \vee (16k \wedge \neg 16n+15) =  (16(2n-k)+16 \wedge \neg(16(n+2k)+8)) \vee (16k \wedge \neg 16n+8)= 8g(2n+1,2k)$, i.e
$F(16n+15,16k) = F(2n+1,2k)$. 

\noindent $g(8n+7,8k) = (8(2n-k)+14 \wedge \neg(8(n+2k)+7)) \vee (8k \wedge \neg 8n+7) =  (8(2n-k)+8 \wedge \neg(8(n+2k)+4)) \vee (8k \wedge \neg 8n+4) = 4g(2n+1,2k)$, i.e
$F(8n+7,8k) = F(2n+1,2k)$.

\noindent $g(16n+15,16k+12) = (16(2n-k)+18 \wedge \neg(16(n+2k)+39)) \vee (16k+12 \wedge \neg 16n+15) =  (16(2n-k)+20 \wedge \neg(16(n+2k)+20)) \vee (16k+4  \wedge \neg 16n+12)= 4g(4n+3,4k+1)$, i.e
$F(16n+15,16k+12) = F(4n+3,4k+1)$. Similarly, $F(16n+15,16k+13) = F(16n+15,16k+14) = F(4n+3,4k+1)$.

\noindent $g(4n+3,4k) = (4(2n-k)+6 \wedge \neg(4(n+2k)+3)) \vee (4k \wedge \neg 4n+3) =  (4(2n-k)+4 \wedge \neg(4(n+2k)+2)) \vee (4k \wedge \neg 4n+2) = 2g(2n+1,2k)$, i.e
$F(4n+3,4k) = F(2n+1,2k)$.

\noindent $g(4n+3,4k+1) = (4(2n-k)+5 \wedge \neg(4(n+2k)+5)) \vee (4k+1 \wedge \neg 4n+3) =  (4(2n-k)+4 \wedge \neg(4(n+2k)+7)) \vee (4k+2 \wedge \neg 4n+3) = g(4n+3,4k+2) $, i.e
$F(4n+3,4k+1) = F(4n+3,4k+2)$.

Similar arguments show that $F(8n+7,8k+i) = F(4n+3,4k+1)$ for $i=4,5,6$.
The rest of the proof is similar to Lemma \ref{lem:fandgdouble}.

\end{proof}

\begin{theorem} \label{thm:recursivelucas}
Let $a(n) =  \sum_{k=0}^n \left[\left(\begin{array}{c}n+2k\\2n-k\end{array}\right) \left(\begin{array}{c}n\\k\end{array}\right) \mod 2\right]$. Then
$a(n)$ satisfies the equations $a(0) =1$, $a(2n) = a(n)$, $a(16n+1) = a(n)$,  $a(16n+3) = 2a(n)$, $a(16n+5) = a(n)$, and $a(16n+7) = a(n)$.
Furthermore, $a(16n+9) = 2a(2n+1)$,
$a(16n+11) = 2a(2n+1)$, and $a(16n+13) = a(4n+3)$.
Finally $a(16n+15) = a(8n+7) + a(4n+3)$.
This implies that  the sequence $$a(n)= \{1, 1, 1, 2, 1, 1, 2, 1, 1, 1, 1, 2, 2, 2, 1, 3, 1, 1, 1, 2, 1, 1, 2, 1, 2, 2, 2, 4, 1, 1,\dots\}$$ is the run length transform of the extended Lucas sequence
 $1,1,2,1,3,4,7,11,18,\dots$.
\end{theorem}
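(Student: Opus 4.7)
The plan is to parallel the proofs of Theorems \ref{thm:recursivecows} and \ref{thm:recursivedouble}: use Lemma \ref{lem:fandglucas} to reduce each $F(16n+i,\,16k+j)$ to a simpler $F$-value, sum over $k$ to obtain a recurrence for $a(16n+i)$, and then invoke Corollary \ref{cor:rlt-k3} to identify $a(n)$ as the claimed run length transform. First I would apply Lemma \ref{lem:a2n} (whose $a_1=1$ clauses are triggered by our parameter $a_1=1$) to get $a(0) = 1$ and $a(2n) = a(n)$, and I would record the identities
\[\sum_k F(n,k) = a(n), \quad \sum_k F(2n+1, 2k) = a(n), \quad \sum_k F(2n+1, 2k+1) = a(2n+1) - a(n)\]
that will be used repeatedly.

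For each odd $i \in \{1,3,5,7,9,11,13\}$, I would expand $a(16n+i) = \sum_{k=0}^n \sum_{j=0}^{15} F(16n+i,\,16k+j)$, drop the zero terms flagged by Theorem \ref{thm:4thorder} and by Lemma \ref{lem:fandglucas}, and substitute the surviving reductions to $F(n,k)$, $F(2n+1, 2k)$, $F(2n+1, 2k+1)$, $F(4n+3, 4k+1)$ or $F(4n+3, 4k+2)$ from the lemma. Summing over $k$ via the identities above then yields the seven announced recurrences. This step is largely mechanical counting.

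The main obstacle is $a(16n+15)$, which carries the genuine fourth-order recurrence. Here Lemma \ref{lem:fandglucas} identifies the nonzero $F(16n+15, 16k+j)$ with $F(2n+1, 2k)$ at $j=0$ and with $F(4n+3, 4k+1)$ at $j \in \{5, 6, 8, 12, 13, 14\}$. I would then decompose $a(8n+7)$ and $a(4n+3)$ analogously using the further chains $F(8n+7, 8k) = F(2n+1, 2k)$, $F(8n+7, 8k+j) = F(4n+3, 4k+1)$ for $j \in \{4,5,6\}$, together with $F(4n+3, 4k) = F(2n+1, 2k)$, $F(4n+3, 4k+1) = F(4n+3, 4k+2)$, $F(4n+3, 4k+3) = 0$, so that all three of $a(16n+15)$, $a(8n+7)$, $a(4n+3)$ become explicit integer combinations of the atomic partial sums $\sum_k F(2n+1, 2k)$ and $\sum_k F(4n+3, 4k+1)$. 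Once written in that form, verifying $a(16n+15) = a(8n+7) + a(4n+3)$ reduces to matching coefficients. The delicate point is getting the multiplicities exactly right across the three expansions.

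Finally, with all recurrences established, I would appeal to Corollary \ref{cor:rlt-k3}. Reading off $c_0 = 1$, $c_1 = 1$, $c_2 = 2$, $c_3 = 1$ from the recurrences for $a(16n+1)$, $a(16n+3)$, $a(16n+7)$ (consistency-checked against $a(16n+5)$, $a(16n+9)$, $a(16n+11)$, $a(16n+13)$), together with $c_4 = c_5 = 1$ and $c_6 = c_7 = 0$ from $a(16n+15) = a(8n+7) + a(4n+3)$, the corollary identifies $\{S_n\}$ as the sequence with $S_0 = S_1 = 1$, $S_2 = 2$, $S_3 = 1$ satisfying $S_{n+1} = S_n + S_{n-1}$ for $n \geq 3$. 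This is exactly the extended Lucas sequence $1, 1, 2, 1, 3, 4, 7, 11, 18, \ldots$ obtained by prepending $1, 1$ to the Lucas numbers, completing the identification of $a(n)$ as its run length transform.
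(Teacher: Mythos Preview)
Your overall strategy—split $a(16n+i)$ into residue classes of $k$ mod $16$, apply Lemma~\ref{lem:fandglucas} and Theorem~\ref{thm:4thorder} to each $F(16n+i,16k+j)$, sum, and then read off the run-length-transform data via Corollary~\ref{cor:rlt-k3}—is exactly the route the paper takes. The atomic-sum bookkeeping you describe for $a(16n+15)$, $a(8n+7)$, and $a(4n+3)$ is the right way to close the last recurrence, and indeed the paper leaves precisely this kind of coefficient-matching implicit.

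There is, however, a genuine slip in your setup. You invoke the ``$a_1=1$'' clauses of Lemma~\ref{lem:a2n} (and implicitly of Theorem~\ref{thm:fandg}) to record $\sum_k F(2n+1,2k)=a(n)$ and $\sum_k F(2n+1,2k+1)=a(2n+1)-a(n)$. Those clauses carry the additional hypothesis $a_3\in\{0,1\}$, and here $a_3=2$, so they do not apply; in fact $F(2n+1,2k)=F(n,k)$ is \emph{false} for these parameters (already at $n=k=0$ one has $F(1,0)=0\ne 1=F(0,0)$), so the two displayed identities are both wrong. Fortunately your argument never needs them individually: for $i=9,11$ only the combination $\sum_k F(2n+1,2k)+\sum_k F(2n+1,2k+1)=a(2n+1)$ is required (which is just the parity split of $a(2n+1)$ and needs no hypothesis on the $a_i$), and for $i=13$ and $i=15$ you already treat $\sum_k F(2n+1,2k)$ as an opaque atomic. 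So the fix is simply to drop those two identities and work throughout with the atomic sums $\sum_k F(2n+1,2k)$ and $\sum_k F(4n+3,4k+1)$, exactly as you do in the $16n+15$ case; the comparison with the analogous atomic decompositions of $a(2n+1)$, $a(4n+3)$, $a(8n+7)$ then yields each recurrence.
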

\begin{proof}
This follows as a consequence of Lemma \ref{lem:fandgcows},  Corollary \ref{cor:rlt-k3}, Theorem \ref{thm:4thorder}, Lemma \ref{lem:a2n_order3}, Lemma \ref{lem:fandgdouble} and Lemma \ref{lem:fandglucas}. By  Corollary \ref{cor:rlt-k3}, the recurrence relations implies that $a(n)$ is the run length transform of a sequence $S_n$ that satisfies $S_0 = S_1 = S_3 = 1$, $S_2 = 2$ and $S_{n+1} = S_n + S_{n-1}$ for $n\geq 3$.
\end{proof}

\section{Conclusions}

The sequences considered in the above sections and their run length transforms are summarized in Table \ref{tbl:seq-rlt}. In the table, the coefficients $a_i$ describe the run length transform expressed as $\sum_{k=0}^n \left[\left(\begin{array}{c}a_1n+a_2k\\a_3n+a_4k\end{array}\right) \left(\begin{array}{c}n\\k\end{array}\right) \mod 2\right]$.

\begin{table}[htbp]
\begin{center}
\begin{tabular}{|l|l|l|l|l|}
\hline
{\bf \shortstack{Sequence\\description}}  & {\bf\shortstack{OEIS\\index}}  & {\bf Sequence terms} & {\bf \shortstack{Coefficients\\$(a_1,a_2,a_3,a_4)$\\of Run\\Length\\Transform}} & {\bf \shortstack{OEIS\\index\\of Run\\Length\\Transform}}\\
\hline\hline
\shortstack{Positive\\powers\\of $2$} &  \OEIS{A000079} & $1,2,4,8,\dots$ & $(1,0,0,1)$ & \OEIS{A001316} \\
\hline
\shortstack{Fibonacci\\sequence} & \OEIS{A000045} & $1, 1, 2, 3, 5, 8,\dots$ & $(1,-1,0,2)$ & \OEIS{A246028} \\
\hline
\shortstack{Truncated\\Fibonacci\\sequence} & & $1, 2, 3, 5, 8,13,\dots$ & $(0,3,0,1)$ & \OEIS{A245564} \\
\hline
\shortstack{1 plus the\\positive\\powers of $2$} & \OEIS{A011782} & $1,1,2,4,8,16, \dots$ & $(1,0,0,2)$ & \OEIS{A245195}  \\
\hline
\shortstack{1 followed\\by $2$'s} & \OEIS{A040000} & $1,2,2,2,2,2, \dots$ & $(1,2,0,2)$ & \OEIS{A277561}  \\
\hline
\shortstack{Positive\\integers} & \OEIS{A000027} & $1,2,3,4,5,6, \dots$ & $(1,1,1,-1)$ & \OEIS{A106737} \\
\hline
\shortstack{A sequence\\of 1's} & \OEIS{A000012} & $1,1,1,1,1,1, \dots$ & $(1,-1,0,1)$ & \OEIS{A000012} \\
\hline
\shortstack{Narayana's\\cows\\sequence} & \OEIS{A000930} & $1,1,1,2,3,4,6,9, \dots$ & $(1,-1,0,6)$ & \OEIS{A329720}  \\
\hline
\shortstack{Positive\\integers\\repeated} & \OEIS{A008619} & $1,1,2,2,3,3,4,4, \dots$ & $(1,3,0,6)$ &  \OEIS{A278161}\\
\hline
\shortstack{Lucas\\sequence\\prepended\\with 1,1} & \OEIS{A329723} & $1,1,2,1,3,4,7,11, \dots$ & $(1,2,2,-1)$ &  \OEIS{A329722}\\
\hline
\end{tabular}
\caption{Table of various sequences and their run length transforms expressed as sums of products of binomial coefficients mod $2$. }
\label{tbl:seq-rlt}
\end{center}
\end{table}

The run length transform has been useful in analyzing the number of ON cells in a cellular automata after $n$ iterations \cite{sloane:CA:2018}.  We show here that the run length transform can also characterize sums of products of binomial coefficients mod 2. In particular, the run length transforms of several well-known sequences correspond to sums of products of binomial coefficients mod 2.   Given the fact that several cellular automata can generate Sierpi\'{n}ski's  triangle \cite{mathworld:sierpinski} which is equivalent to Pascal's triangle mod 2, this is not surprising and suggests that there is a close relationship between cellular automata and functions of binomial coefficients mod 2.

\end{document}